\date{}
\newlength{\defbaselineskip}
\newcommand{\setlinespacing}[1]%
           {\setlength{\baselineskip}{#1 \defbaselineskip}}
\newcommand{\N}{{\mathbb{N}}}
\newcommand{\actaqed}{\hfill $\actabox$}
{\medskip\noindent \textit{Proof of #1. }}%
{\actaqed \medskip}
\def\D{{\mathcal D}}
\def\C{{\mathcal C}}
\def\cC{{\mathcal C}}
\def \Tr{\mathcal T}
\def \cX{\mathcal X}
\def \cM{\mathcal M}
\def\R{{\mathbb R}}
\def\Z{\mathbb Z}
\def \T{\mathbb T}
\def\bbC{\mathbb C}
\def \<{\langle}
\def\>{\rangle}
\def \Og{\Omega}
\def \ff{\varphi}
\def\al{\alpha}
\def\bt{\beta}
\def \ro{\varrho}
\def \sp{\operatorname{span}}
\def \conv{\operatorname{conv}}
\def\bx{\mathbf x}
\def\by{\mathbf y}
\def\bk{\mathbf k}
\def\bs{\mathbf s}
\def\bW{\mathbf W}
\def\bF{\mathbf F}
\def\bA{\mathbf A}
\def\bt{\beta}
\newtheorem{Theorem}{Theorem}[section]
\newtheorem{Lemma}{Lemma}[section]
\newtheorem{Definition}{Definition}[section]
\newtheorem{Corollary}{Corollary}[section]
\numberwithin{equation}{section}
\newcommand{\be}{\begin{equation}}
\newcommand{\ee}{\end{equation}}
\def\al{{\alpha}}
\def\Bl{\Bigl}
\def\Br{\Bigr}
\def\f{\frac}
\def\vi{\varphi}
\def\CD{{\mathcal D}}
\def\NN{{\mathbb N}}
\def\Og{\Omega}
\def\sub{\substack}
\def\cX{\mathcal{X}}
\def\spn{\operatorname{span}}
\def\bx{\mathbf{x}}
\def\by{\mathbf{y}}
\def\bx{\mathbf{x}}
\DeclareSymbolFont{fouriersymbols}{FMS}{futm}{m}{n}
\DeclareSymbolFont{fourierlargesymbols}{FMX}{futm}{m}{n}
\DeclareMathDelimiter{\VT}{\mathord}{fouriersymbols}{152}{fourierlargesymbols}{147}
\begin{document}

\title{Sampling recovery on function classes with a structural condition}

\author{ A.P. Solodov and   V.N. Temlyakov 	\footnote{
		This work was supported by the Russian Science Foundation under grant no. 23-71-30001, https://rscf.ru/project/23-71-30001/, and performed at Lomonosov Moscow State University.
  }}

\newcommand{\Addresses}{{% additional braces for segregating \footnotesize
  \bigskip
  \footnotesize

 A.P. Solodov, \textsc{   Lomonosov Moscow State University; \\ Moscow Center of Fundamental and Applied Mathematics.}
 
%E-mail:} \texttt{a_solodov@mail.ru}

 \medskip 
  V.N. Temlyakov, \textsc{ Steklov Mathematical Institute of Russian Academy of Sciences, Moscow, Russia;\\ Lomonosov Moscow State University; \\ Moscow Center of Fundamental and Applied Mathematics; \\ University of South Carolina, USA.
  \\
E-mail:} \texttt{temlyakovv@gmail.com}

}}
\maketitle

\begin{abstract}{Sampling recovery on some function classes is studied in this paper. 
Typically, function classes are defined by imposing  smoothness conditions. It was understood in nonlinear approximation that structural conditions in the form of control 
of the number of big coefficients of an expansion of a function with respect to a given system of functions
plays an important role. Sampling recovery on smoothness classes is an area of active 
research, some problems, especially in the case of mixed smoothness classes, are still open. It was discovered recently that universal sampling discretization and nonlinear sparse 
approximations are useful in the sampling recovery problem. This motivated us to systematically study sampling recovery on function classes with a structural condition. 
Some results in this direction are already known. In particular, the classes defined by 
conditions on coefficients with indices from the domains, which are differences of two dyadic cubes, are studied in the recent paper of the second author. In this paper we concentrate on studying 
function classes defined by 
conditions on coefficients with indices from the domains, which are differences of two dyadic 
hyperbolic crosses. 
		}
\end{abstract}

{\it Keywords and phrases}: Sampling discretization, universality, recovery, hyperbolic cross.

{\it MSC classification 2000:} Primary 65J05; Secondary 42A05, 65D30, 41A63.

\section{Introduction}
\label{I}

This paper is a followup of the recent papers 
\cite{KoTe}, \cite{VT202}, and \cite{VT203}.  The reader can find a discussion of the previous results directly 
connected with this area of research in \cite{KoTe}, \cite{VT202}, and \cite{VT203}. Results on discretization can be found in the recent survey papers \cite{DPTT}, \cite{KKLT}, and \cite{LMT}. 

The main goal of this paper is to study optimal sampling recovery on specific classes of multivariate functions. 	The reader can find a discussion of this type of classes in \cite{VT203}, Section 6. We give a brief description of previous results, which are directly related to results of this paper, and some of main results of this paper. All necessary notations and definitions are given in Section \ref{DN}.

We use $C$, $C'$ and $c$, $c'$ to denote various positive constants. Their arguments indicate the parameters, which they may depend on. Normally, these constants do not depend on a function $f$ and running parameters $m$, $v$, $u$. We use the following symbols for brevity. For two nonnegative sequences $a=\{a_n\}_{n=1}^\infty$ and $b=\{b_n\}_{n=1}^\infty$ the relation $a_n\ll b_n$ means that there is  a number $C(a,b)$ such that for all $n$ we have $a_n\le C(a,b)b_n$. Relation $a_n\gg b_n$ means that 
 $b_n\ll a_n$ and $a_n\asymp b_n$ means that $a_n\ll b_n$ and $a_n \gg b_n$. 
 For a real number $x$ denote $[x]$ the integer part of $x$, $\lceil x\rceil$ -- the smallest integer, which is 
 greater than or equal to $x$.

 For a function class $\bF\subset \cC(\Omega)$,  we  define (see \cite{TWW})
$$
\varrho_m^o(\bF,L_p) := \inf_{\xi } \inf_{\cM} \sup_{f\in \bF}\|f-\cM(f(\xi^1),\dots,f(\xi^m))\|_p,
$$
where $\cM$ ranges over all  mappings $\cM : \bbC^m \to   L_p(\Omega,\mu)$  and
$\xi$ ranges over all subsets $\{\xi^1,\cdots,\xi^m\}$ of $m$ points in $\Og$. 
Here, we use the index {\it o} to mean optimality.  

In \cite{VT202}   the behavior of $\varrho_m^o(\bF,L_p)$ for a new kind of classes -- $\bA^r_\bt(\Psi)$ was studied.    The order of $\varrho_m^o(\bA^r_\bt(\Psi),L_p)$ (up to a logarithmic in $m$ factor) in the case $1\le p\le 2$ for the class of uniformly bounded orthonormal systems $\Psi$ was obtained.
We give a definition of classes $\bA^r_\bt(\Psi)$.  For a given $1\le p\le \infty$,
let $\Psi =\{\psi_{\bk}\}_{\bk \in \Z^d}$, $\psi_\bk \in \cC(\Omega)$, $\|\psi_\bk\|_p \le B$, $\bk\in\Z^d$,  be a system in the space $L_p(\Omega,\mu)$. We consider functions representable in the form of an  absolutely convergent series
\be\label{Irepr}
f = \sum_{\bk\in\Z^d} a_\bk(f)\psi_\bk,\quad \sum_{\bk\in\Z^d} |a_\bk(f)|<\infty.
\ee
For $\bt \in (0,1]$ and $r>0$ consider the following class $\bA^r_\bt(\Psi)$ of functions $f$, which have representations (\ref{Irepr}) satisfying the following conditions
\be\label{IAr}
  \left(\sum_{[2^{j-1}]\le \|\bk\|_\infty <2^j} |a_\bk(f)|^\bt\right)^{1/\bt} \le 2^{-rj},\quad j=0,1,\dots  .
\ee

In the special case, when $\Psi$ is the trigonometric system $\Tr^d := \{e^{i(\bk,\bx)}\}_{\bk\in \Z^d}$   the following lower bound  
\be\label{Arlb}
\varrho_m^o(\bA^r_\bt(\Tr^d),L_1) \gg m^{1/2-1/\bt-r/d}
\ee 
was proved in \cite{VT202} (see Theorem 4.6 there).
 In \cite{VT202}   this lower bound was complemented by the following upper bound. 
 Assume that $\Psi$ is a uniformly bounded $\|\psi_\bk\|_\infty \le B$, $\bk\in\Z^d$, orthonormal system.
Let $1\le p\le 2$, $\bt \in (0,1]$, and $r>0$, then
$$
 \varrho_{m}^{o}(\bA^r_\bt(\Psi),L_p(\Omega,\mu))\le  \varrho_{m}^{o}(\bA^r_\bt(\Psi),L_2(\Omega,\mu))
$$
\be\label{Arub}
  \ll  (m(\log(m+1))^{-5})^{1/2 -1/\bt-r/d} .  
\ee
It is noted in \cite{VT202} that $(\log(m+1))^{-5}$ in (\ref{Arub}) can be replaced by $(\log(m+1))^{-4}$. 
Bounds (\ref{Arlb}) and (\ref{Arub}) show that generally for uniformly bounded orthonormal systems $\Psi$, for instance for the trigonometric system $\Tr^d$, the gap between the upper and the lower bounds is 
in terms of an extra logarithmic in $m$ factor.  

In the paper \cite{VT203}   the case $2\le p <\infty$ was studied. For a uniformly bounded Riesz system $\Psi$ Corollary 5.1 of \cite{VT203} gives for $2\le p<\infty$
\begin{equation}\label{I12}
 \varrho_{m}^{o}(\bA^r_\bt(\Psi),L_p(\Omega,\mu)) \ll  \left(\frac{m}{(\log m)^4}\right)^{1-1/p -1/\bt -r/d} .  
\end{equation}
Moreover, the bound (\ref{I12}) is provided by the simple greedy algorithm -- Weak Orthogonal Matching Pursuit (WOMP) (see the definition below). In the special case of the $d$-variate trigonometric system $\Tr^d$ the following bounds were obtained in \cite{VT203} (see (1.13) there)
\begin{equation}\label{I13}
 m^{1-1/p -1/\bt -r/d}\ll \varrho_{m}^{o}(\bA^r_\bt(\Tr^d),L_p) \ll  \left(\frac{m}{(\log m)^3}\right)^{1-1/p -1/\bt -r/d} .  
\end{equation}

We now proceed to other kind of multivariate function classes, which we study in this paper. 
Let  $\Psi=\{\psi_\bk\}_{\bk\in \Z^d}$ be as above  in the definition of classes $\bA^r_\bt(\Psi)$. 
We now define classes $\bW^{a,b}_{A_\bt}(\Psi)$. 

Let $\mathbf s=(s_1,\dots,s_d )$ be a  vector  whose  coordinates  are
nonnegative integers and
$$
\rho(\mathbf s) := \bigl\{ \mathbf k\in\mathbb Z^d:[ 2^{s_j-1}] \le
|k_j| < 2^{s_j},\qquad j=1,\dots,d \bigr\}.
$$
For 
\be\label{R9a}
f=\sum_{\bk}a_\bk(f) \psi_\bk,\qquad \sum_{\bk}|a_\bk(f) | <\infty,
\ee
 denote
$$
\delta_\bs(f,\Psi):= \sum_{\bk\in \rho(\bs)}a_\bk(f) \psi_\bk,\quad f_j:=\sum_{\|\bs\|_1=j}\delta_\bs(f,\Psi), \quad j\in \N_0,\quad \N_0:=\N\cup \{0\}
$$
and   for $\bt\in (0,1]$
$$
|f|_{A_\bt(\Psi)} := \left(\sum_{\bk}|a_\bk(f)|^\bt\right)^{1/\bt}.
$$
For parameters $ a\in \R_+$, $ b\in \R$ define the class $\bW^{a,b}_{A_\bt}(\Psi)$ of functions $f$ for which 
there exists a representation (\ref{R9a}) satisfying
\be\label{R10}
 |f_j|_{A_\bt(\Psi)} \le 2^{-aj}(\bar j)^{(d-1)b},\quad \bar j:=\max(j,1), \quad j\in \N_0.
\ee

In the case, when $\Psi$ is the trigonometric system and $\bt=1$, classes $\bW^{a,b}_{A_\bt}(\Psi)$
were introduced in \cite{VT150}. The general definition in the case $\bt=1$ is given in \cite{DTM2}. We use the notation $\bW^{a,b}_{A}(\Psi):= \bW^{a,b}_{A_1}(\Psi)$.

The following Theorem \ref{HT3} was proved in \cite{KoTe} (see Theorem 5.3 and Proposition 5.1 there).

\begin{Theorem}[{\cite{KoTe}}]\label{HT3}  Let $p\in [2,\infty)$. There exist two constants $c(a,p,d)$ and $C(a,b,p,d)$
such that we have the bound
\begin{equation}\label{sr3}
 \varrho_{m}^{o}(\bW^{a,b}_A,L_p(\T^d)) \le C(a,b,p,d)  v^{-a-1/p} (\log v)^{(d-1)(a+b)}
\end{equation}
    for any $v\in\N$ and any $m$ satisfying
$$
m\ge c(a,d,p) v(\log(2v))^4.
$$
\end{Theorem}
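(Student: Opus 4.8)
The plan is to establish the bound in Theorem~\ref{HT3} by combining a nonlinear sparse approximation estimate with a universal sampling discretization result, following the strategy that produced the bounds for the classes $\bA^r_\bt(\Psi)$. First I would analyze the structure of a function $f\in\bW^{a,b}_A(\T^d)$. The condition (\ref{R10}) with $\bt=1$ controls the $\ell_1$-type norm of the blocks $f_j$ on the hyperbolic cross layers $\{\bs:\|\bs\|_1=j\}$. Let $v$ be the target parameter and set a cutoff index $n\asymp\log v$ so that the dyadic hyperbolic cross $Q_n:=\bigcup_{\|\bs\|_1\le n}\rho(\bs)$ has cardinality $\asymp v(\log v)^{d-1}$ and $|Q_n|$ is comparable to the sampling budget $m$ up to the logarithmic factors in the hypothesis $m\ge c(a,d,p)v(\log(2v))^4$. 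The tail $\sum_{j>n}f_j$ is estimated directly in $L_p$: since the blocks live on differences of dyadic hyperbolic crosses, I would apply the standard Nikolskii-type inequality for $L_1\to L_p$ on $\rho(\bs)$-polynomials together with the geometric decay $2^{-aj}$ in (\ref{R10}), summing the layers $\|\bs\|_1=j$ for $j>n$. The number of such $\bs$ grows like $j^{d-1}$, which is exactly where the logarithmic power $(\log v)^{(d-1)(a+b)}$ and the exponent $b$ enter.

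The heart of the argument is to recover, from $m$ well-chosen sample values, a good approximant to the head $P:=\sum_{j\le n}f_j$, which is a polynomial whose frequencies lie in $Q_n$. Here I would invoke the universal sampling discretization machinery (as developed in the references \cite{KoTe}, \cite{DPTT}, \cite{KKLT}, \cite{LMT}): for a subspace spanned by the trigonometric system restricted to $Q_n$, there is a set of $m\asymp v(\log v)^4$ points providing an $L_p$-norm discretization that is \emph{universal} over all subsets of frequencies of a prescribed cardinality. The logarithmic power $4$ in the hypothesis on $m$ is precisely the cost of this universal discretization. Using this, I would run a sparse recovery/approximation step: the $A_\bt$-type control on coefficients means $P$ is effectively $v$-sparse after discarding small coefficients, so one recovers the large coefficients and bounds the $L_p$-error of the sparse approximation of the head by a term of the same order $v^{-a-1/p}(\log v)^{(d-1)(a+b)}$.

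The final step is to assemble the mapping $\cM$ and combine the two error contributions. The recovery map takes the $m$ sample values, applies the discretization-based reconstruction to produce an approximant $\tilde P$ to the head, and outputs $\tilde P$; the total error is $\|f-\tilde P\|_p\le\|P-\tilde P\|_p+\|\sum_{j>n}f_j\|_p$, and both summands are of the claimed order. I would then optimize the choice of sparsity level against the number of retained layers $n$ to match $v^{-a-1/p}(\log v)^{(d-1)(a+b)}$, absorbing all constants into $C(a,b,p,d)$.

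The main obstacle I expect is the discretization step for $2\le p<\infty$: one needs a sampling set that is simultaneously good for \emph{all} relevant sparse frequency patterns inside $Q_n$ (universality), with the correct dependence of $m$ on $v$ and the logarithmic factors, and one must control the $L_p$ norm (not just $L_2$) of the reconstruction error. Obtaining the discretization with only the power $(\log v)^4$ in $m$, rather than a larger power, while keeping the reconstruction error at the optimal rate, is the delicate quantitative point; everything else reduces to the Nikolskii inequalities and geometric summation over hyperbolic cross layers, which are routine given the structural condition (\ref{R10}).
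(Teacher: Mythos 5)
Your overall architecture (sparse approximation $+$ universal discretization $+$ a recovery step, with the hyperbolic-cross layer structure supplying the logarithms) is the right one, and it is essentially the route the paper takes for its generalizations of this statement (Theorem \ref{NT0} $\to$ Lemma \ref{NL1} $\to$ Theorem \ref{RT3} $\to$ Corollary \ref{RC1}; the quoted theorem itself is imported from \cite{KoTe}). But two quantitative steps in your sketch do not close, and they are precisely where the work is.

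First, the head/tail split with $2^n\asymp v$ and a \emph{direct} $L_p$ bound on the discarded tail cannot reach the rate $v^{-a-1/p}$. The class condition (\ref{R10}) with $\bt=1$ only gives $\|f_j\|_p\le\|f_j\|_\infty\le|f_j|_{A_1}\le 2^{-aj}\bar j^{(d-1)b}$, and this is sharp (take $f_j$ a single normalized harmonic), so $\bigl\|\sum_{j>n}f_j\bigr\|_p\gg 2^{-an}n^{(d-1)b}\asymp v^{-a}(\log v)^{(d-1)b}$ for worst-case members of the class: the factor $v^{-1/p}$ is simply absent. An $L_1\to L_p$ Nikol'skii inequality goes the wrong way here. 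The missing mechanism is the one in Theorem \ref{NT0}: the tail layers $n<j\le c_0n$ must themselves be approximated sparsely, with geometrically decreasing budgets $v_j=[2^{n-\al(j-n)}j^{d-1}]$, using the smoothness/convexity bound of Lemma \ref{NLv} to gain a factor $(v_j+1)^{1/p^*-1/\bt}$ per layer; for $\bt=1$, $p\ge2$ this yields the exponent $-a-1/2$, and the remaining conversion of $-1/2$ into $-1/p$ happens only at the recovery stage through the $v$-term Nikol'skii inequality $\|g\|_p\le Cv^{1/2-1/p}\|g\|_2$ for $g\in\Sigma_v(\Tr^d)$ (this is exactly how Corollary \ref{RC1} is deduced from Theorem \ref{RT3}). "Discarding small coefficients of the head" does not produce this gain either, since the head is already within the term budget.

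Second, your recovery map reconstructs "the head $P$" from samples of $f$, but the samples carry the tail, and you give no control of how that perturbation propagates through the reconstruction; moreover you ask for an $L_p$-universal discretization, which with only $m\asymp v(\log v)^4$ points is not what is available for $p\ne2$. The paper sidesteps both issues: it uses only $L_2$-universal discretization (Theorem \ref{RT2}) on a frequency set $Q$ of polynomial size $|Q|\le v^{c^*}$, runs WOMP on the full sampled function, and invokes the Lebesgue-type inequality of Theorem \ref{NUT1}, which bounds the $L_p(\Omega,\mu)$ recovery error by $H\,\sigma_v(f,\Psi_Q)_{L_p(\Omega,\mu_\xi)}$ with the semi-discrete measure $\mu_\xi=(\mu+\mu_m)/2$; the tail is then absorbed into this best $v$-term approximation error, which Lemma \ref{NL1} controls uniformly in $\xi$. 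Without a device of this kind (or an explicit aliasing estimate for the tail on the sample set), your error decomposition $\|f-\tilde P\|_p\le\|P-\tilde P\|_p+\|\sum_{j>n}f_j\|_p$ leaves $\|P-\tilde P\|_p$ unproved.
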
 

We now formulate some of the main results of this paper, obtained in Section \ref{N}. 
 
\begin{Theorem}\label{MT1}  Assume that $\Psi$ is a uniformly bounded Riesz system (in the space  $L_2(\Omega,\mu)$) satisfying (\ref{ub}) and \eqref{Riesz} for some constants $0<R_1\leq R_2<\infty$.
Let $2\le p<\infty$  and $a>0$.  
 There exist constants $c=c(a,b,\bt,p,R_1,R_2,d)$ and $C=C(a,b,\bt,p,d)$ such that   we have the bound   
 \begin{equation}\label{R4w}
 \varrho_{m}^{o}(\bW^{a,b}_{A_\bt}(\Psi),L_p(\Omega,\mu)) \le C  v^{1-1/p -1/\bt -a}  (\log(2v))^{(d-1)(a+b)} 
\end{equation}
for any $m$ satisfying 
$$
m\ge c   v  (\log(2v ))^4 .
$$
Moreover, this bound is provided by the WOMP.
\end{Theorem}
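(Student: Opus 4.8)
The plan is to follow the scheme used in \cite{VT203} for the classes $\bA^r_\bt(\Psi)$, replacing the dyadic cube layers there by the dyadic hyperbolic layers $f_j$ of \eqref{R10}, and to combine three ingredients: a truncation estimate controlling the high-order layers directly in $L_p$, a best $v$-term approximation estimate for the low-order part, and the realization of that sparse approximation from $m$ function values by means of universal sampling discretization together with WOMP. Throughout, $(\Omega,\mu)$ is a probability space, so for $2\le p<\infty$ and any $g\in L_\infty$ one has the interpolation inequality $\|g\|_p\le\|g\|_\infty^{1-2/p}\|g\|_2^{2/p}$, which is the workhorse of the estimates below.

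First I would fix $v$, set the level $n:=\lceil\kappa\log_2 v\rceil$ with $\kappa:=1+(1/p+1/\bt-1)/a\ge 1$, and put $Q_n:=\bigcup_{\|\bs\|_1\le n}\rho(\bs)$ and $N:=|Q_n|\asymp 2^n(\bar n)^{d-1}$, so that $N$ is a fixed power of $v$ and $\log N\asymp\log v$. Splitting $f=\sum_{j\le n}f_j+\sum_{j>n}f_j$, I estimate the tail layerwise. The uniform bound \eqref{ub} gives $\|f_j\|_\infty\le B|f_j|_{A_1(\Psi)}\le B|f_j|_{A_\bt(\Psi)}$ (using $\bt\le 1$), while the Riesz bound \eqref{Riesz} gives $\|f_j\|_2\le R_2|f_j|_{A_2(\Psi)}\le R_2|f_j|_{A_\bt(\Psi)}$ (using $\bt\le 2$); the interpolation inequality then yields $\|f_j\|_p\le C|f_j|_{A_\bt(\Psi)}\le C\,2^{-aj}(\bar j)^{(d-1)b}$ by \eqref{R10}. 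Since $a>0$ the series $\sum_{j>n}2^{-aj}(\bar j)^{(d-1)b}$ is geometric and is dominated by its first term, so the choice of $\kappa$ forces $\bigl\|\sum_{j>n}f_j\bigr\|_p\le C\,2^{-an}(\bar n)^{(d-1)b}\le C\,v^{1-1/p-1/\bt-a}(\log(2v))^{(d-1)(a+b)}$, i.e.\ the tail is already within the asserted bound.

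The core step is a best $v$-term approximation of the head $\sum_{j\le n}f_j\in\spn\{\psi_\bk:\bk\in Q_n\}$. Ordering the coefficients of a single layer by magnitude, \eqref{R10} gives the decreasing-rearrangement bound $a^*_k\le|f_j|_{A_\bt(\Psi)}\,k^{-1/\bt}$; retaining the $s$ largest terms and bounding the remainder by the same $\|\cdot\|_\infty^{1-2/p}\|\cdot\|_2^{2/p}$ interpolation produces the single-layer estimate $\sigma_s(f_j)_p\le C|f_j|_{A_\bt(\Psi)}\,s^{1-1/p-1/\bt}$. Writing $\gamma:=1/p+1/\bt-1>0$, I would then allocate a total of $\asymp v$ terms among the layers $j\le n$, taking in each layer the number $s_j$ proportional to $|f_j|_{A_\bt(\Psi)}^{1/(\gamma+1)}$ and capped by the layer cardinality $\asymp 2^j(\bar j)^{d-1}$, which amounts to keeping all coefficients of the low layers and a geometrically decreasing number in the higher ones. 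Summing the per-layer errors and carrying out this constrained optimization is the main computational obstacle; it yields
\[
\sigma_v\Bigl(\sum_{j\le n}f_j\Bigr)_p\le C\,v^{-a-\gamma}(\log(2v))^{(d-1)(a+b)}=C\,v^{1-1/p-1/\bt-a}(\log(2v))^{(d-1)(a+b)},
\]
matching both the power of $v$ and the power of the logarithm in \eqref{R4w}.

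Finally I would pass from approximation to sampling. Because all active atoms lie in $Q_n$ with $\log N\asymp\log v$, I invoke universal sampling discretization of the $L_p$ norm over the family of all $v$-dimensional coordinate subspaces of $\spn\{\psi_\bk:\bk\in Q_n\}$ for a uniformly bounded Riesz system: there exist $m\asymp v(\log(2v))^4$ points $\xi^1,\dots,\xi^m$ on which the discrete and continuous $L_p$ norms are equivalent for every $v$-sparse element, the exponent $4$ being exactly what $\log N\asymp\log v$ permits. Applying WOMP to the discretized problem built from the values $f(\xi^1),\dots,f(\xi^m)$ over the dictionary $\{\psi_\bk:\bk\in Q_n\}$ returns, as in \cite{VT203}, a $v$-sparse approximant whose $L_p$ error does not exceed a constant multiple of $\sigma_v(\sum_{j\le n}f_j)_p$ plus the tail $\bigl\|\sum_{j>n}f_j\bigr\|_p$; combining the two displays above gives \eqref{R4w} with the recovery operator being WOMP. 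The essential difficulties are thus the layerwise best-$s$-term optimization of the previous paragraph and the verification that the universal discretization for this family of sparse subspaces indeed costs only $m\asymp v(\log(2v))^4$ samples; the remaining steps are routine adaptations of the arguments in \cite{VT203} and \cite{KoTe}.
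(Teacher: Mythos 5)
Your approximation-theoretic core (splitting into hyperbolic layers $f_j$, bounding the tail in $\|\cdot\|_\infty$ and $\|\cdot\|_2$ via \eqref{ub} and \eqref{Riesz}, thresholding each layer and allocating a geometrically decaying budget $s_j$) is sound as an estimate for $\sigma_v(f,\Psi)_{L_p(\Omega,\mu)}$, and your exponent bookkeeping does land on $v^{1-1/p-1/\bt-a}(\log(2v))^{(d-1)(a+b)}$. The gap is in the passage from approximation to sampling, and it is twofold. First, you invoke \emph{$L_p$-universal sampling discretization} over the $v$-sparse subspaces with only $m\asymp v(\log(2v))^4$ points for all $2\le p<\infty$. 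No such result is available; for $p>2$ sampling discretization of $L_p$ norms of $v$-sparse elements is known to require substantially more points. The paper only uses $L_2$-universal discretization (Theorem \ref{RT2} with $p=2$) and transfers to $L_p$ through a $u$-term Nikol'skii inequality $\Psi\in NI(2,p,H,u)$, which for a uniformly bounded Riesz system holds automatically with $H\asymp u^{1/2-1/p}$ (this is exactly the content of the proof of Corollary \ref{RC1}, via $\|g\|_\infty\le u^{1/2}R_2\|g\|_2$ and $\|g\|_p\le\|g\|_2^{2/p}\|g\|_\infty^{1-2/p}$). That transfer costs a factor $v^{1/2-1/p}$ in the final bound, which your accounting never pays.

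Second, and more fundamentally, the Lebesgue-type inequality for WOMP (Theorem \ref{NUT1}, inequality \eqref{mp3n}) controls the recovery error by $\sigma_v(f,\Psi_Q)_{L_p(\Omega,\mu_\xi)}$ with the \emph{mixture measure} $\mu_\xi=(\mu+\mu_m)/2$, not by $\sigma_v(f,\Psi_Q)_{L_p(\Omega,\mu)}$. Your single-layer estimate $\sigma_s(f_j)_p\ll|f_j|_{A_\bt(\Psi)}s^{1-1/p-1/\bt}$ rests on the Riesz lower/upper bounds in $L_2(\Omega,\mu)$ applied to the non-sparse remainder of $f_j$ after thresholding; under $\mu_\xi$ the only available control of the empirical $L_2$ part of that remainder is $\|\cdot\|_\infty\le\|\cdot\|_{A_1}$ (the remainder has $\asymp 2^jj^{d-1}$ terms, so the $u$-sparse discretization does not apply), and the factor $s^{-1/p}$ evaporates, leaving $s^{1-1/\bt}$ — not enough to reach \eqref{R4w}. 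The paper's Theorem \ref{NT0} and Lemma \ref{NL1} avoid this by approximating the $A_1$-remainder with the greedy Lemma \ref{NLv}, which requires only $\|\psi_\bk\|_{L_p(\Omega,\pi)}\le 1$ (true for \emph{any} probability measure $\pi$ by \eqref{ub}) and the power-type-$2$ smoothness of $L_p$; this gives the measure-independent per-layer rate $s^{1/2-1/\bt}$, and the product with the Nikol'skii factor $v^{1/2-1/p}$ is precisely $v^{1-1/p-1/\bt-a}$ in Theorem \ref{RT3}. To repair your argument you must replace the interpolation step in each layer by an argument valid in $L_p(\Omega,\mu_\xi)$ and insert the Nikol'skii transfer — at which point you have reconstructed the paper's proof.
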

We now formulate a version of Theorem \ref{MT1} in the special case of the trigonometric system $\Tr^d$. 
\begin{Theorem}\label{MT2}  Assume that $\Psi$ is the trigonometric system $\Tr^d$. Let $2\le p<\infty$  and $a>0$.  
 There exist constants $c=c(a,b,\bt,p,d)$ and $C=C(a,b,\bt,p,d)$ such that   we have the bound   
 \begin{equation}\label{Mtr}
 \varrho_{m}^{o}(\bW^{a,b}_{A_\bt}(\Tr^d),L_p(\T^d)) \le C  v^{1-1/p -1/\bt -a}  (\log(2v))^{(d-1)(a+b)} 
\end{equation}
for any $m$ satisfying 
$$
m\ge c   v  (\log(2v ))^3 .
$$
Moreover, this bound is provided by the WOMP.
\end{Theorem}

\section{Historical remarks}
\label{H}

For the reader's convenience we give some brief comments on the history of sampling recovery of the multivariate functions with mixed smoothness. The classes $\bW^{a,b}_{A_\bt}(\Psi)$, which we study in this paper, are close in a certain sense to the classes known in approximation theory under different names: Classes of functions with mixed smoothness, classes of functions with dominating mixed derivative, and others (see, for instance, \cite{VTmon}, \cite{VTbookNS}, \cite{DTU}, and \cite{VTbookMA}). An important difference between classes of functions with mixed smoothness and classes $\bW^{a,b}_{A_\bt}(\Psi)$ is that the definition of our new classes is not based on
smoothness properties of functions and instead is based on the structural properties. The structural properties are formulated in terms of coefficients of expansions with respect to the system $\Psi$. At this stage we use the hyperbolic crosses, which play an important role in the study of classes of functions with mixed smoothness. This makes the classes $\bW^{a,b}_{A_\bt}(\Psi)$ close to the classes of functions with mixed smoothness. For illustrational purposes we give a brief historical comments on the sampling recovery 
results for classes of functions with dominating mixed derivative. We define these classes momentarily.  
We begin with the case of univariate periodic functions. Let for $r>0$ 
\be\label{H1}
F_r(x):= 1+2\sum_{k=1}^\infty k^{-r}\cos (kx-r\pi/2) 
\ee
and
$$
W^r_q := \{f:f=\varphi \ast F_r,\quad \|\varphi\|_q \le 1\},
$$
where
$$
 \quad (\varphi \ast F_r)(x):= (2\pi)^{-1}\int_\T \ff(y)F_r(x-y)dy.  
$$
It is well known that for $r>1/q$ the class $W^r_q$ is embedded into the space of continuous functions $\cC(\T)$.  

In the multivariate case for $\bx=(x_1,\dots,x_d)$ denote
$$
F_r(\bx) := \prod_{j=1}^d F_r(x_j)
$$
and
$$
\bW^r_q := \{f:f=\varphi\ast F_r,\quad \|\varphi\|_q \le 1\},
$$
where
$$ (\varphi \ast F_r)(\bx):= (2\pi)^{-d}\int_{\T^d} \ff(\by)F_r(\bx-\by)d\by.
$$
 
The above defined classes $\bW^r_q$ are classical classes of functions with {\it dominating mixed derivative} (Sobolev-type classes of functions with mixed smoothness). The reader can find results on approximation properties of these classes in the books \cite{VTbookMA} and \cite{DTU}.

Recall the setting 
 of the optimal linear recovery. For a fixed $m$ and a set of points  $\xi:=\{\xi^j\}_{j=1}^m\subset \Omega$, let $\Phi $ be a linear operator from $\bbC^m$ into $L_p(\Omega,\mu)$.
Denote for a class $\bF$ (usually, centrally symmetric and compact subset of $L_p(\Omega,\mu)$) (see \cite{VT51})
$$
\varrho_m(\bF,L_p) := \inf_{\text{linear}\, \Phi; \,\xi} \sup_{f\in \bF} \|f-\Phi(f(\xi^1),\dots,f(\xi^m))\|_p.
$$

 In the above described linear recovery procedure the approximant  comes from a linear subspace of dimension at most $m$. 
It is natural to compare $\varrho_m(\bF,L_p)$  with the 
Kolmogorov widths. Let $X$ be a Banach space and $\bF\subset X$ be a  compact subset of $X$. The quantities  
$$
d_n (\bF, X) :=  \inf_{\{u_i\}_{i=1}^n\subset X}
\sup_{f\in \bF}
\inf_{c_i} \left \| f - \sum_{i=1}^{n}
c_i u_i \right\|_X, \quad n = 1, 2, \dots,
$$
are called the {\it Kolmogorov widths} of $\bF$ in $X$. It is clear that in the case $X=H$ is a Hilbert space the best approximant from a given subspace is provided by the corresponding orthogonal projection. 

We have the following obvious inequality
$$
d_m (\bF, L_p)\le  \varrho_m(\bF,L_p).
$$

Recently an outstanding progress has been made in the sampling recovery in the $L_2$ norm. We give a very brief comment on those interesting results. For special sets $\bF$ (in the reproducing kernel Hilbert space setting) the following inequality was proved (see \cite{DKU}, \cite{NSU}, \cite{KU}, and \cite{KU2}):
\be\label{H5}
\ro_{cn}(\bF,L_2) \le \left(\frac{1}{n}\sum_{k\ge n} d_k (\bF, L_2)^2\right)^{1/2}
\ee
with an absolute constant $c>0$.  

Known results on the $d_k (\bW^r_p, L_2)$ (see, for instance, \cite{VTbookMA}, p.216): For $1<q\le 2$, $r > 1/q-1/2$ and $2<q<\infty$, $r> 0$
\be\label{H6a}
d_k(\bW^r_q,L_2) \asymp \left(\frac{(\log k)^{d-1})}{k}\right)^{r-(1/q-1/2)_+},\quad (a)_+ := \max(a,0),
\ee
 combined with the (\ref{H5}) give for $1<q\le \infty$, $r > \max(1/q,1/2)$ the following bounds
$$
 \ro_m(\bW^r_q,L_2) \ll  \left(\frac{(\log m)^{d-1})}{m}\right)^{r-(1/q-1/2)_+}, 
$$
which solve the problem of the right orders of decay of the sequences $\ro_m(\bW^r_q,L_2)$ in the case $1<q<\infty$ and $r > \max(1/q,1/2)$ because, obviously, $ \ro_m(\bW^r_q,L_2) \ge  d_m(\bW^r_q,L_2)$.

The following result was obtained in \cite{VT183}. 
\begin{Theorem}\label{BT1} Let $\bF$ be a compact subset of $\C(\Omega)$. There exist two positive absolute constants $b$ and $B$ such that
$$
\ro_{bn}(\bF,L_2) \le Bd_n(\bF,L_\infty).
$$
\end{Theorem}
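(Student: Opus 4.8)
The plan is to turn a near-optimal subspace for the width $d_n(\bF,L_\infty)$ into an explicit linear sampling operator via weighted least squares, and to control the recovery error by a Marcinkiewicz-type discretization of the $L_2$ norm on that subspace. Fix $\varepsilon>0$ and choose an $n$-dimensional subspace $U_n\subset\C(\Omega)$ so that every $f\in\bF$ has a best $L_\infty$ approximant $u_f\in U_n$ with $\|f-u_f\|_\infty\le d_n(\bF,L_\infty)+\varepsilon=:\sigma$ (attainment uses compactness of $\bF$, and letting $\varepsilon\to0$ at the end absorbs the loss). Throughout, $\mu$ is a probability measure, so $\|g\|_2\le\|g\|_\infty$ for every $g$; this is the bridge from the $L_\infty$ approximation to the $L_2$ error.

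The key ingredient, and the main obstacle, is a sampling discretization of the $L_2$ norm with a number of points proportional to the dimension. Set $V:=U_n+\spn\{1\}$, of dimension at most $n+1$. I would invoke the deep subsampling / weighted Marcinkiewicz machinery (of Batson--Spielman--Srivastava and Marcus--Spielman--Srivastava type): there exist an absolute constant $b$, points $\{\xi^1,\dots,\xi^m\}\subset\Omega$ with $m\le b\,n$, and weights $w_j\ge0$ such that
$$
\tfrac12\|u\|_2^2 \le \sum_{j=1}^m w_j|u(\xi^j)|^2 \le \tfrac32\|u\|_2^2, \qquad u\in V.
$$
Including the constant function in $V$ is a deliberate device to control the total weight: testing the upper bound on $1\in V$ gives $\sum_{j=1}^m w_j\le\tfrac32\|1\|_2^2=\tfrac32$, which is essential below.

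With these points and weights, define the linear recovery map
$$
\Phi(f):=\arg\min_{u\in U_n}\sum_{j=1}^m w_j|f(\xi^j)-u(\xi^j)|^2,
$$
which depends linearly on the data $(f(\xi^1),\dots,f(\xi^m))$ and uses $m\le bn$ samples. Writing $\|g\|_w^2:=\sum_j w_j|g(\xi^j)|^2$, I would estimate $\|f-\Phi(f)\|_2\le\|f-u_f\|_2+\|u_f-\Phi(f)\|_2$. The first term is at most $\|f-u_f\|_\infty\le\sigma$. For the second, $u_f-\Phi(f)\in U_n\subset V$, so the lower discretization bound gives $\|u_f-\Phi(f)\|_2\le\sqrt2\,\|u_f-\Phi(f)\|_w$, while by minimality of $\Phi(f)$ and the triangle inequality in $\|\cdot\|_w$,
$$
\|u_f-\Phi(f)\|_w \le \|u_f-f\|_w+\|f-\Phi(f)\|_w \le 2\|f-u_f\|_w \le 2\,\|f-u_f\|_\infty\Bigl(\sum_j w_j\Bigr)^{1/2} \le 2\sigma\sqrt{3/2}.
$$
Combining these yields $\|f-\Phi(f)\|_2\le(1+2\sqrt3)\,\sigma$ uniformly in $f\in\bF$. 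Since $m\le bn$, the map $\Phi$ is admissible in the definition of $\ro_{bn}$, so taking the supremum over $\bF$ and then $\varepsilon\to0$ gives $\ro_{bn}(\bF,L_2)\le B\,d_n(\bF,L_\infty)$ with $B=1+2\sqrt3$.

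The only non-elementary step is the discretization with $m\asymp n$ points and absolute constants on an arbitrary finite-dimensional subspace of continuous functions; this is exactly where the hard spectral-sparsification input enters, and it is the crux of the argument. Once it is granted, the augmentation-by-constants trick keeps the total weight $O(1)$, and the weighted least-squares error analysis above is routine. A minor point to verify is that the near-optimal $L_\infty$ subspace may be taken inside $\C(\Omega)$, so that point evaluation and the discretization are legitimate; for compact $\bF\subset\C(\Omega)$ this causes no difficulty.
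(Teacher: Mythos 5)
Your argument is correct and is essentially the proof from the cited source \cite{VT183}: the present paper states Theorem \ref{BT1} without proof, attributing it to that reference, where the result is obtained exactly as you describe --- a weighted Marcinkiewicz-type $L_2$ discretization with $m\le bn$ points for an $(n+1)$-dimensional subspace (Batson--Spielman--Srivastava type machinery, with the constant function adjoined to control $\sum_j w_j$), combined with weighted least squares on a near-optimal subspace for $d_n(\bF,L_\infty)$. Your error analysis, giving an absolute constant $B=1+2\sqrt{3}$, matches the standard one, and the only points needing care (linearity of the least-squares map via a pseudo-inverse, and realizing the near-optimal subspace inside $\C(\Omega)$ so that point evaluations make sense) are exactly the ones you flag.
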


This theorem, combined with the upper bounds for the Kolmogorov widths obtained in \cite{TU1},
gives the following bounds for the sampling recovery (see \cite{TU1}): Let $2<q\le\infty$ and $1/q<r<1/2$ then
\be\label{H3}
 \ro_m(\bW^r_q,L_2) \ll m^{-r} (\log m)^{(d-2)(1-r)+1}
\ee
In the case $r=1/2$ we obtain
\be\label{H4}
\ro_m(\bW^{1/2}_q,L_2) \ll m^{-1/2} (\log m)^{d/2}(\log \log m)^{3/2}.
\ee

We refer the reader for results on the sampling recovery in the uniform norm to the   papers \cite{PU} and \cite{KKT}.  
The reader can find a   discussion of these results in \cite{KKLT}, Section 2.5. 

 Let us make a brief comment on the nonlinear recovery characteristic  
$\varrho_m^o$ (see Section \ref{K} for further discussion). The authors of \cite{JUV} (see Corollary 4.16 in v3) proved the following interesting bound for $1<q<2$, $r>1/q$ and  $m\ge c(r,d,q) v(\log(2v))^3$,
\begin{equation}\label{H6}
\varrho_{m}^{o}(\bW^{r}_q,L_2(\T^d)) \le C(r,d,q)  v^{-r+1/q-1/2} (\log v)^{(d-1)(r+1-2/q)+1/2}.     
\end{equation}
In \cite{DTM2}   we proved the following bound
\be\label{H7}
\varrho_{m}^{o}(\bW^{r}_q,L_2(\T^d)) \le C'(r,d,q)  v^{-r+1/q-1/2} (\log v)^{(d-1)(r+1-2/q)}      
\end{equation}
provided that
\be\label{H8}
m\ge c'(r,d,q) v(\log(2v))^3.
\ee
Let us make some comments on bounds (\ref{H6}) and (\ref{H7}). First of all, these bounds were proved by different methods. Bound (\ref{H6}) 
was proved in \cite{JUV} with the help of a powerful result from compressed sensing theory 
and bound (\ref{H7}) is based on a result on universal discretization and uses the greedy type algorithm the WOMP.  The reader can find some results on compressed sensing in \cite{VTbook} and \cite{KaTe}.
Second, the WOMP provides a sparse with respect to the trigonometric system approximant while (\ref{H6}) does not provide a sparse one. Third, the algorithm used in \cite{JUV} uses some information of the function class ($\bW^{r}_q$ in our case). This means that for each class $\bW^{r}_q$ we need an algorithm adjusted to this class. 
The algorithm WOMP does not use any information of the function class and provides the bound (\ref{H7}).
This makes the WOMP universal for the collection of classes $\bW^{r}_q$. 
Finally, bound (\ref{H7}) is slightly sharper than (\ref{H6}). 

In the above mentioned results we study sampling recovery in the $L_2$ norm. The technique, which was used in the proofs of the bounds (\ref{H6}) and (\ref{H7}) is heavily based on the fact that we approximate in the $L_2$ norm. The following result was proved in \cite{KoTe} (see Theorem 5.5 and Proposition 5.1) very recently. 

 \begin{Theorem}[{\cite{KoTe}}]\label{HT2}  Let $1<q\le 2<p<\infty$.  There exist two constants $c=c(r,d,p,q)$ and $C=C(r,d,p,q)$ such that     we have the bound for $r>1/q$
 \begin{equation}\label{H9}
  \varrho_{m}^{o}(\bW^{r}_q,L_p(\T^d)) \le C   v^{-r+1/q-1/p} (\log v)^{(d-1)(r+1-2/q)}
\end{equation}
   for any $v\in\N$ and any $m$ satisfying
$$
m\ge c  v(\log(2v))^3.
$$
\end{Theorem}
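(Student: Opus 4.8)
The plan is to bound the nonlinear quantity $\varrho_m^o(\bW^r_q,L_p(\T^d))$ by a near-best $v$-term trigonometric approximation of the class in $L_p$, and to realise such an approximant from $m\asymp v(\log 2v)^3$ samples by combining universal sampling discretization with the greedy algorithm WOMP. This is the architecture behind \eqref{H7} in \cite{DTM2} and behind the $L_p$-theory of \cite{VT203}; the present bound is the $p>2$ analogue of the $L_2$-bound \eqref{H7}, so the new input is concentrated in the parts of the argument that are sensitive to $p$.

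First, I would note why the generic structural machinery is not enough. Writing $f=\varphi\ast F_r$ with $\|\varphi\|_q\le1$, using $|\wh{F_r}(\bk)|\asymp 2^{-rj}$ on the layer $\|\bs\|_1=j$ together with Hausdorff--Young $\|\wh\varphi\|_{q'}\le\|\varphi\|_q\le1$ (here $q'=q/(q-1)\ge2$), a H\"older estimate over the $\asymp 2^{j}j^{d-1}$ frequencies of the layer gives $|f_j|_{A_\bt(\Tr^d)}\ll 2^{-rj}(2^{j}j^{d-1})^{1/\bt-1/q'}$, i.e. $f\in\bW^{a,b}_{A_\bt}(\Tr^d)$ with $a=r-1/\bt+1/q'$ and $b=1/\bt-1/q'$. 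This reproduces the correct power $v^{-(r-1/q+1/p)}$ of \eqref{H9}, but since $a+b=r$, Theorem~\ref{MT1} yields only $(\log v)^{(d-1)r}$, strictly weaker than the claimed $(\log v)^{(d-1)(r+1-2/q)}$. The discrepancy is a genuine nonlinear effect: the block condition \eqref{R10} forces the worst case on every block at once, whereas a single $\varphi$ obeys the global budget $\sum_\bk|\wh\varphi(\bk)|^{q'}\le1$ and so cannot be extremal on all blocks simultaneously. Hence I would argue directly rather than through the embedding.

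The crux is therefore a sharp best $v$-term estimate. Decompose $f=\sum_\bs\delta_\bs(f)$ over dyadic boxes and weight each box by $w_\bs:=\sum_{\bk\in\rho(\bs)}|\wh\varphi(\bk)|^{q'}$, so that $\sum_\bs w_\bs\le1$. One then selects a family $G$ of boxes of total cardinality $\sum_{\bs\in G}\#\rho(\bs)\le v$ minimising $\sum_{\bs\notin G}\|\delta_\bs(f)\|_p$, a knapsack/greedy problem in which each level $\|\bs\|_1=j$ carries $\asymp j^{d-1}$ boxes of cost $\asymp 2^{j}$ and the budget $\sum_\bs w_\bs\le1$ is what limits the adversary. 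The delicate point is that the crude bound $\|\delta_\bs(f)\|_p\ll 2^{-(r-1/q+1/p)\|\bs\|_1}w_\bs^{1/q'}$ coming from Hausdorff--Young is not tight; one must instead use the precise Nikolskii/volume estimates for hyperbolic-cross polynomials, for which Hausdorff--Young is lossy. With the sharp block bounds in hand, balancing the selection over the $\asymp\log v$ relevant levels is expected to yield the best $v$-term rate
\[
\inf_{\#\supp g\le v}\|f-g\|_p\ll v^{-(r-1/q+1/p)}(\log v)^{(d-1)(r+1-2/q)} .
\]

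Finally I would pass from approximation to sampling. Universal discretization of the $L_p$ norm for trigonometric polynomials with frequencies in the pertinent hyperbolic cross (of cardinality $\asymp v$) holds on a point set of size $m\asymp v(\log 2v)^3$ (cf. \cite{DPTT}), so the continuous $L_p$ norm is equivalent to a discrete one simultaneously over all $v$-sparse candidate supports; WOMP run against the samples, together with a Lebesgue-type inequality for the greedy algorithm, then produces a $v$-term polynomial whose $L_p$-error matches the best $v$-term bound, which gives \eqref{H9}. I expect two real obstacles: (i) obtaining the sharp block estimates and carrying out the selection that produces the exact exponent $(d-1)(r+1-2/q)$; and (ii) controlling WOMP in $L_p$ for $p>2$, which---unlike the $L_2$ case of \eqref{H7}---must be routed through the Riesz/$\ell_2$ structure of the discretised polynomial space and is precisely what turns the Hilbert-space exponent $\tfrac12$ into $\tfrac1p$.
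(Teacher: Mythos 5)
First, a remark on scope: the paper does not prove this theorem --- it is imported verbatim from \cite{KoTe} (Theorem 5.5 and Proposition 5.1 there) --- so I am measuring your proposal against the method of that reference as it is reflected in the machinery of Sections \ref{K} and \ref{N}. Your overall architecture (sharp best $v$-term approximation $+$ universal discretization with $m\asymp v(\log 2v)^3$ points via the RIP results of \cite{HR}, \cite{Bour} $+$ a Lebesgue-type inequality for WOMP) is the right one, and your diagnosis that the embedding $\bW^r_q\subset\bW^{a,b}_{A_\bt}(\Tr^d)$ with $a+b=r$ can only produce $(\log v)^{(d-1)r}$ is correct and genuinely useful. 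The problem is that the entire weight of the proof then rests on the sharp best $v$-term estimate, and you leave it as ``expected to yield'': the block selection is never carried out, and ``one must instead use the precise Nikolskii/volume estimates \dots with the sharp block bounds in hand'' is a placeholder for the hardest part of the argument. That estimate is in fact a known theorem on constructive sparse trigonometric approximation of classes with mixed smoothness (\cite{VT150}, see also \cite{VTbookMA}), and the cited proof simply invokes it; your sketch neither invokes nor re-derives it.

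There is also an inconsistency in your bookkeeping between the approximation step and the recovery step. For $1<q\le 2\le p<\infty$ the sharp constructive bound is $\sigma_v(\bW^r_q,\Tr^d)_p\ll v^{-r+1/q-1/2}(\log v)^{(d-1)(r+1-2/q)}$ --- note the exponent $1/2$, not $1/p$ --- and it is precisely the Lebesgue-type inequality of Theorem~\ref{NUT1}, whose right-hand side carries the $u$-term Nikol'skii constant $H\asymp v^{1/2-1/p}$, that converts $1/2$ into $1/p$, exactly as in the passage from Theorem~\ref{RT3} to Corollary~\ref{RC1}. You instead assert the best $v$-term rate $v^{-r+1/q-1/p}$ and then claim WOMP ``matches'' it with no loss; that would require a lossless Lebesgue-type inequality in $L_p$ for $p>2$, which the available machinery does not provide. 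Conversely, feeding your stated approximation bound into Theorem~\ref{NUT1} produces an extra factor $v^{1/2-1/p}$ and a result strictly weaker than \eqref{H9}. So either your approximation exponent or your recovery loss is misplaced; both cannot stand as written, and your closing sentence (which correctly senses that the $1/2$ must become a $1/p$ somewhere) does not resolve which.
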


\section{Some definitions and notations}
\label{DN}

 We begin with a brief 
description of  some  necessary concepts on 
sparse approximation.   Let $X$ be a Banach space with norm $\|\cdot\|:=\|\cdot\|_X$, and let $\D=\{g_i\}_{i=1}^\infty $ be a given (countable)  system of elements in $X$. Given a finite subset $J\subset \NN$, we define $V_J(\D):=\spn\{g_j:\  \ j\in J\}$. 
For a positive  integer $ v$, we denote by $\mathcal{X}_v(\D)$ the collection of all linear spaces $V_J(\D)$  with   $|J|=v$, and 
denote by $\Sigma_v(\D)$  the set of all $v$-term approximants with respect to $\D$; that is, 
$
\Sigma_v(\D):= \bigcup_{V\in\cX_v(\D)} V.
$
Given $f\in X$,  we define
$$
\sigma_v(f,\D)_X := \inf_{g\in\Sigma_v(\D)}\|f-g\|_X,\  \ v=1,2,\cdots.
$$ 
Moreover,   for a function class $\bF\subset X$, we define 
$$
 \sigma_v(\bF,\D)_X := \sup_{f\in\bF} \sigma_v(f,\D)_X,\quad  \sigma_0(\bF,\D)_X := \sup_{f\in\bF} \|f\|_X.
 $$
 
 In this paper we consider the case $X=L_p(\Omega,\mu)$, $1\le p<\infty$. More precisely, let $\Omega$ be a compact subset of $\R^d$ with the probability measure $\mu$ on it. By the $L_p$ norm, $1\le p< \infty$, of the complex valued function defined on $\Omega$,  we understand
$$
\|f\|_p:=\|f\|_{L_p(\Omega,\mu)} := \left(\int_\Omega |f|^pd\mu\right)^{1/p}.
$$
In the case $X=L_p(\Omega,\mu)$ we sometimes write for brevity $\sigma_v(\cdot,\cdot)_p$ instead of 
$\sigma_v(\cdot,\cdot)_{L_p(\Omega,\mu)}$.

We study systems, which have  the universal sampling discretization property. 
 
  \begin{Definition}\label{ID1} Let $1\le p<\infty$. We say that a set $\xi:= \{\xi^j\}_{j=1}^m \subset \Omega $ provides the {\it  $L_p$-universal sampling discretization}   for the collection $\cX:= \{X(n)\}_{n=1}^k$ of finite-dimensional  linear subspaces $X(n)$ if we have
 \be\label{ud}
\frac{1}{2}\|f\|_p^p \le \frac{1}{m} \sum_{j=1}^m |f(\xi^j)|^p\le \frac{3}{2}\|f\|_p^p\quad \text{for any}\quad f\in \bigcup_{n=1}^k X(n) .
\ee
We denote by $m(\cX,p)$ the minimal $m$ such that there exists a set $\xi$ of $m$ points, which
provides  the $L_p$-universal sampling discretization (\ref{ud}) for the collection $\cX$. 

We will use a brief form $L_p$-usd for the $L_p$-universal sampling discretization (\ref{ud}).
\end{Definition}

 Denote for a set $\xi$ of $m$ points from $\Omega$
\be\label{muxi}
\Og_m:=\xi:=\{\xi^1,\cdots, \xi^m\}\subset \Omega,\   \  \mu_m:= \frac1{m}\sum_{j=1}^m \delta_{\xi^j},\  \  \ \text{and}\  \ \mu_\xi := \f{\mu+\mu_m}2,
\ee
where $\delta_\bx$ denotes the Dirac measure supported at a point $\bx$.

 In this paper alike the paper \cite{VT202} we study a special case, when $\Psi$   satisfies certain restrictions. We formulate these restrictions in the form of conditions imposed on $\Psi$.

{\bf Condition A.} Assume that $\Psi$ is a uniformly bounded system. Namely, we assume that $\Psi:=\{\ff_j\}_{j=1}^\infty$ is a system of uniformly bounded functions on $\Og \subset \R^d$ such that
\be \label{ub}
\sup_{\bx\in\Og} |\vi_j(\bx)|\leq 1,\   \ 1\leq j<\infty.
\ee

{\bf Condition B1.} Assume that  $\Psi $ is an orthonormal system.

{\bf Condition B2.} Assume that  $ \Psi $ is a Riesz system, i.e.
for any $N\in\N$ and any $(a_1,\cdots, a_N) \in\bbC^N,$
\begin{equation}\label{Riesz}
R_1 \left( \sum_{j=1}^N |a_j|^2\right)^{1/2} \le \left\|\sum_{j=1}^N a_j\ff_j\right\|_2 \le R_2 \left( \sum_{j=1}^N |a_j|^2\right)^{1/2},
\end{equation}
where $0< R_1 \le R_2 <\infty$.  

{\bf Condition B3.} Assume that  $\Psi$ is a Bessel system, i.e. there exists a constant $K>0$ such that for any $N\in\N$ and   for any $(a_1,\cdots, a_N) \in\bbC^N,$
\begin{equation}\label{Bessel}
  \sum_{j=1}^N |a_j|^2 \le K  \left\|\sum_{j=1}^N a_j\ff_j\right\|^2_2 .
\end{equation}

Clearly, Condition B1 implies Condition B2 with $R_1=R_2=1$. Condition B2 implies Condition B3 with 
$K=R_1^{-2}$.

\section{Some known results}
\label{K}

We formulate some known results that are used in our analysis.
The  Weak Orthogonal Matching Pursuit (WOMP)  is a  greedy algorithm defined with respect to a given dictionary (system) $\D=\{g\}$  in a  Hilbert space  equipped with the inner product $ \<\cdot,\cdot\>$ and the norm  $\|\cdot\|_H$.   It is also known
  under the name Weak Orthogonal Greedy Algorithm (see, for instance, \cite{VTbook}).\\

{\bf Weak Orthogonal Matching Pursuit (WOMP).} Let $\D=\{g\}$ be a system of  nonzero elements in $H$ such that $\|g\|_H\le 1$.  
 Let $\tau:=\{t_k\}_{k=1}^\infty\subset [0, 1]$ be a given  sequence of weakness parameters. 
Given  $f_0\in H$, we define  a sequence  $\{f_k\}_{k=1}^\infty\subset H$ of residuals  for $k=1,2,\cdots$  inductively  as follows: 
\begin{enumerate}[\rm (1)]
	\item 
 $ g_k\in \D$  is any  element  satisfying
$$
|\langle f_{k-1},g_k\rangle | \ge t_k
\sup_{g\in \D} |\langle f_{k-1},g\rangle |.
$$
 
\item  Let  $H_k := \sp \{g_1,\dots,g_k\}$, and define 
$G_k(\cdot, \CD)$ to be the orthogonal projection operator from $H$   onto the space $H_k$ .

\item   Define the residual after the $k$th iteration of the algorithm by
\begin{equation*}
f_k := f_0-G_k(f_0, \D).
\end{equation*}
\end{enumerate}

In the case when $t_k=1$ for $k=1,2,\dots$,   WOMP is called the Orthogonal
Matching Pursuit (OMP). In this paper we only consider the case when  $t_k=t\in (0, 1]$ for $k=1,2,\dots$. Clearly, $g_k$ may not be unique.  However, all the  results formulated  below  are independent of the choice of the $g_k$. The reader can find the theory of greedy approximation in the book \cite{VTbook}.

For the sake of convenience in later applications, we use the notation  $\text{WOMP}\bigl(\D; t\bigr)_H$ to   denote the WOMP   defined with respect to  a given weakness parameter $t\in (0, 1]$ and   a system $\CD$ in a Hilbert  space $H$.

 {\bf UP($u,D$). ($u,D$)-unconditional property.}   We say that a system $\D=\{\vi_i\}_{i\in I}$ of  elements 
 in a Hilbert space $H=(H, \|\cdot\|)$ 
  is ($u,D$)-unconditional with  constant $U>0$ for some integers $1\leq u\leq D$ if for any 
   $f=\sum_{i\in A} c_i \vi_i\in \Sigma_u(\D)$ with  $A\subset I$    and  $J\subset I\setminus A$ such that   $|A|+|J| \le D$,  we have
\be\label{UP}
\|f\|\leq U\inf_{g\in V_J(\D)} \|f-g\|,
\ee
where $ V_J(\CD):=\spn\{\vi_i:\  \ i\in J\}$.

We gave the above definition  for a countable (or finite) system $\D$. Similar definition can be given for any system $\D$ as well.

 \begin{Theorem}[{\cite[Corollary I.1]{LT}}]\label{PT1} Let $\CD$ be a dictionary in a Hilbert space $H=(H, \|\cdot\|)$ having  the property  {\bf UP($u,D$)}  with   constant $U>0$ for some  integers $1\leq u\leq D$.   Let $f_0\in H$, and let $t\in (0, 1]$ be a given weakness parameter. 
  Then there exists a positive constant $c_\ast:=c(t,U)$  depending only on $t$ and  $U$ such that  the $\text{WOMP}\bigl(\D; t\bigr)_H$ applied to  $f_0$ gives
  $$
  \left\|f_{\left \lceil{c_\ast v}\right\rceil} \right\| \le C\sigma_v(f_0,\D)_H,\  \ v=1,2,\cdots, \min\Bl\{u,  [ D/{(1+c_\ast)]}\Br\},
  $$
   where  $C>1$ is an absolute constant, and $f_k$ denotes the residual of $f_0$ after the $k$-th iteration of the algorithm.

\end{Theorem}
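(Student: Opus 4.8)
The plan is to couple the elementary energy-decrease inequality for weak orthogonal greedy steps with a correlation lower bound coming from a near-best $v$-term approximant, using the property \textbf{UP}$(u,D)$ to pass between coefficient $\ell_2$-norms and $H$-norms on sparse elements. Fix $f_0$ and choose, for small $\epsilon>0$, an approximant $G=\sum_{j\in B}c_j\varphi_j$ with $|B|=v$ and $\sigma:=\|f_0-G\|\le(1+\epsilon)\sigma_v(f_0,\D)_H$. Write $f_k=f_0-G_k(f_0,\D)$ for the residuals and $\Lambda_k$ for the set of dictionary indices selected in the first $k$ steps. The two facts I would record first are the monotonicity $\|f_k\|\le\|f_{k-1}\|$ and the one-step bound
\[
\|f_k\|^2\le\|f_{k-1}\|^2-|\langle f_{k-1},g_k\rangle|^2\le\|f_{k-1}\|^2-t^2\Bigl(\sup_{g\in\D}|\langle f_{k-1},g\rangle|\Bigr)^2 ,
\]
which holds because $f_{k-1}\perp H_{k-1}$, $\|g_k\|\le1$, and $g_k$ satisfies the weak greedy inequality with parameter $t$.

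Next I would produce a lower bound for the greedy maximum. Since $f_{k-1}\perp H_{k-1}$ we have $\langle f_{k-1},f_0\rangle=\|f_{k-1}\|^2$, whence $\mathrm{Re}\,\langle f_{k-1},G\rangle\ge\|f_{k-1}\|(\|f_{k-1}\|-\sigma)$. Expanding $\langle f_{k-1},G\rangle=\sum_{j\in B}c_j\langle f_{k-1},\varphi_j\rangle$ and using Cauchy--Schwarz in $j$ gives $\bigl(\sum_{j\in B}|\langle f_{k-1},\varphi_j\rangle|^2\bigr)^{1/2}\ge\|f_{k-1}\|(\|f_{k-1}\|-\sigma)/\|c\|_2$, and hence $\sup_{g\in\D}|\langle f_{k-1},g\rangle|\ge v^{-1/2}\|f_{k-1}\|(\|f_{k-1}\|-\sigma)/\|c\|_2$. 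This is the point at which \textbf{UP}$(u,D)$ is essential: it supplies a Riesz-type lower bound $\|c\|_2\le c(U)\|G\|$ for the $v$-sparse $G$ (valid since $v\le u$) and, treating the already selected indices as the interference set $J$, a near-orthogonality estimate linking the coefficients of $G$ to the current residual. Applying \textbf{UP} at step $k$ requires the combined support to have size $v+k\le v+\lceil c_* v\rceil\le(1+c_*)v+1\le D$, which is exactly the origin of the upper limit $v\le\min\{u,[D/(1+c_*)]\}$ in the statement.

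With these estimates in hand I would run a counting argument over the $K:=\lceil c_* v\rceil$ iterations. Suppose, for contradiction, that $\|f_K\|>C\sigma$; by monotonicity $\|f_{k-1}\|>C\sigma$ for all $k\le K$. Split the steps into \emph{capturing} steps, at which $g_k=\varphi_j$ for some $j\in B\setminus\Lambda_{k-1}$, and \emph{spurious} steps. There are at most $v$ capturing steps, and if all indices of $B$ were selected then $G\in H_K$ forces $\|f_K\|\le\sigma$, contradicting the assumption; so at least one index of $B$ is never captured and the residual stays large. For a spurious step the atom $g_k$ lies off $B$, so the one-step bound combined with the correlation lower bound forces a definite decrement of $\|f_{k-1}\|^2$; an amortized accounting of these decrements against the total energy and the remaining optimal mass bounds the number of spurious steps by $c(t,U)\,v$. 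Choosing $c_*=c(t,U)+1$ makes $K$ exceed the total admissible number of steps, a contradiction; hence $\|f_K\|\le C\sigma$, and letting $\epsilon\to0$ gives the claim with an absolute constant $C$.

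The step I expect to be the main obstacle is the amortized count in the last paragraph, namely forcing the number of spurious selections to be proportional to $v$ with a constant depending only on $t$ and $U$ and \emph{not} on the ratio $\|f_0\|/\sigma$. A crude use of the correlation bound with the fixed quantity $\|c\|_2$ only yields a per-step decrement that degrades as $\|f_{k-1}\|\downarrow\sigma$, and therefore a number of iterations that is logarithmic in $\|f_0\|/\sigma$ rather than linear in $v$. Removing this logarithm requires charging each spurious step against the still-uncaptured mass $\sum_{j\in B\setminus\Lambda_{k-1}}|c_j|^2$ instead of against $\|c\|_2$, and using \textbf{UP} to keep this remaining mass comparable to $\|f_{k-1}\|^2$ at every step while the total selected support is kept below $D$ so that \textbf{UP} remains applicable. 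This bookkeeping, rather than any single inequality, is the technical heart of the argument.
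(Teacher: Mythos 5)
The paper itself offers no proof of Theorem \ref{PT1}: it is imported verbatim from \cite[Corollary I.1]{LT}, so the only comparison available is with the argument in that reference. Measured against it, your proposal assembles the correct opening moves --- the one-step energy identity $\|f_k\|^2=\|f_{k-1}\|^2-|\langle f_{k-1},g_k\rangle|^2/\|g_k\|^2$, the correlation lower bound obtained by testing $f_{k-1}$ against a near-best $v$-term element $G$, and the intention to use \textbf{UP($u,D$)} to control coefficient norms --- but the decisive step is missing, and you correctly identify it yourself in your final paragraph. The per-step decrement you derive is proportional to $\|f_{k-1}\|^2(\|f_{k-1}\|-\sigma)^2/(v\|c\|_2^2)$, and since $\|c\|_2$ is tied to $\|G\|\approx\|f_0\|$ rather than to $\|f_{k-1}\|$, iterating it yields a bound of the form $\|f_k\|^2\le\|f_0\|^2(1-c/v)^k+C\sigma^2$, i.e.\ $O(v\log(\|f_0\|/\sigma_v))$ iterations. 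This is strictly weaker than the theorem, which demands $\lceil c_\ast v\rceil$ iterations with $c_\ast$ independent of $f_0$. The device that removes the logarithm in \cite{LT} (following Zhang's analysis of OMP under RIP) is a multiscale peeling induction: the coefficients of the comparison element are grouped into blocks by magnitude, and one shows that a number of iterations proportional to the cardinality of each block suffices to eliminate its contribution to the residual, the geometric decay of the blocks' contributions summing to $O(v)$ total iterations. Your ``capturing versus spurious'' dichotomy cannot substitute for this: it is the paradigm of exact support recovery, and in the Lebesgue-inequality setting WOMP may legitimately never select a single index of $B$ while still driving $\|f_k\|$ down to $C\sigma_v$, so bounding the number of ``spurious'' steps by $c(t,U)v$ is not merely difficult bookkeeping but the entire content of the theorem.

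A secondary but genuine gap is your use of \textbf{UP($u,D$)}. As defined in \eqref{UP}, the property compares $\|f\|$ for a $u$-sparse $f$ with its distance to spans over disjoint index sets; taking $g=-\sum_{i\in A_2}c_i\vi_i$ it yields unconditionality of sub-sums, $\bigl\|\sum_{i\in A_1}c_i\vi_i\bigr\|\le U\|G\|$ for $A_1\subset B$, but it does not by itself give the Riesz-type lower bound $\|c\|_2\le c(U)\|G\|$ that your correlation estimate requires --- that would additionally need the atom norms $\|\vi_i\|$ to be bounded away from zero, which is not among the hypotheses (only $\|g\|\le 1$ from above is assumed). In \cite{LT} the passage from the unconditionality property to the quantitative estimates used inside the greedy iteration is itself a nontrivial step, carried out for their conditions A1--A3. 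So the proposal should be regarded as a correct identification of the standard ingredients together with an honest acknowledgement that the amortized counting --- which is the technical heart of \cite[Corollary I.1]{LT} --- has not been supplied.
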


  We will  consider the Hilbert space $L_2(\Omega_m,\mu_m)$ 
 instead of  $L_2(\Omega,\mu)$,  
where $\Omega_m=\{\xi^\nu\}_{\nu=1}^m$ is a set of points that provides a good universal discretization,  and $\mu_m$ is the uniform probability measure on $\Og_m$, i.e., 
$\mu_m\{ \xi^\nu\} =1/m$, $\nu=1,\dots,m$.    Let $\CD_N(\Omega_m)$ denote  the restriction 
of a system  $\CD_N$ on the set  $\Omega_m$.
Theorem \ref{PT2} below  guarantees that the simple greedy algorithm WOMP gives the corresponding Lebesgue-type inequality in the norm $L_2(\Omega_m,\mu_m)$, and hence 
 provides good sparse recovery. Theorem \ref{PT2} was derived in \cite{DTM2} from Theorem \ref{PT1}. 

\begin{Theorem}[{\cite{DTM2}}]\label{PT2}  Let  $\CD_N=\{\vi_j\}_{j=1}^N$ be  a uniformly bounded Riesz basis  in $L_2(\Og, \mu)$  satisfying  \eqref{ub} and \eqref{Riesz} for some constants $0<R_1\leq R_2<\infty$. 
Let $\Og_m=\{\xi^1,\cdots, \xi^m\}$ be a finite subset of $\Og$  that provides the $L_2$-universal discretization (\ref{ud}) for the collection 
$\cX_u(\CD_N)$ and a given integer $1\leq u\leq N$.   Given a weakness parameter $0<t\leq 1$,   there exists a constant integer  $c=c(t,R_1,R_2)\ge 1$  such that for any integer $0\leq v\leq u/(1+c)$ and any $f_0\in \cC(\Omega)$,   the  $$\text{WOMP}\Bl(\D_N(\Og_m); t\Br)_{L_2(\Omega_m,\mu_m)}$$    applied to $f_0$  gives 
\be\label{mp}
\|f_{cv}    \|_{L_2(\Omega_m,\mu_m)} \le C\sigma_v(f_0,\CD_N(\Omega_m))_{L_2(\Omega_m,\mu_m)}, 
\ee
and
\be\label{mp2}
\|f_{c v}\|_{L_2(\Omega,\mu)} \le C\sigma_v(f_0,\CD_N)_\infty,
\ee
where $C>1$ is an  absolute constant, and $f_k$ denotes the residual of $f_0$ after the $k$-th iteration of the algorithm.
 \end{Theorem}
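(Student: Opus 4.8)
The plan is to recognize that both conclusions are instances of the abstract Lebesgue-type inequality of Theorem \ref{PT1}, applied in the discrete Hilbert space $H=L_2(\Og_m,\mu_m)$ with the restricted dictionary $\CD_N(\Og_m)$, followed by a transfer of the discrete estimate \eqref{mp} back to the continuous norm to obtain \eqref{mp2}. The whole argument hinges on one point: the universal sampling discretization \eqref{ud} lets one carry the continuous Riesz property \eqref{Riesz} over to the discrete inner product, and a Riesz system automatically possesses the unconditional property UP needed in Theorem \ref{PT1}.

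First I would record the norm equivalence coming from \eqref{ud} with $p=2$. Since $\mu_m=\frac1m\sum_j\delta_{\xi^j}$, the middle term of \eqref{ud} is exactly $\|f\|_{L_2(\Og_m,\mu_m)}^2$, so for every $f\in\Sigma_u(\CD_N)$ one has $\frac12\|f\|_{L_2(\Og,\mu)}^2\le\|f\|_{L_2(\Og_m,\mu_m)}^2\le\frac32\|f\|_{L_2(\Og,\mu)}^2$. Combining this with \eqref{Riesz}, for any $h=\sum_{i\in B}a_i\vi_i$ with $|B|\le u$ I obtain $\frac{R_1}{\sqrt2}(\sum_{i\in B}|a_i|^2)^{1/2}\le\|h\|_H\le\sqrt{3/2}\,R_2(\sum_{i\in B}|a_i|^2)^{1/2}$. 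Thus $\CD_N(\Og_m)$ is, on $\Sigma_u$, a restricted Riesz system in $H$ with constants $\wt R_1=R_1/\sqrt2$ and $\wt R_2=\sqrt{3/2}\,R_2$.

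Next I would deduce the property UP$(u,u)$ for $\CD_N(\Og_m)$ in $H$. Given $f=\sum_{i\in A}c_i\vi_i$ and any $g=\sum_{i\in J}b_i\vi_i\in V_J(\CD_N(\Og_m))$ with $A\cap J=\emptyset$ and $|A|+|J|\le u$, the difference $f-g$ lies in $\Sigma_u$, so the restricted Riesz bounds apply and give $\|f\|_H\le\wt R_2(\sum_{i\in A}|c_i|^2)^{1/2}$ while $\|f-g\|_H\ge\wt R_1(\sum_{i\in A}|c_i|^2+\sum_{i\in J}|b_i|^2)^{1/2}\ge\wt R_1(\sum_{i\in A}|c_i|^2)^{1/2}$. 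Hence $\|f\|_H\le(\wt R_2/\wt R_1)\|f-g\|_H=\sqrt3\,(R_2/R_1)\|f-g\|_H$, i.e. UP holds with $U=\sqrt3\,R_2/R_1$. Applying Theorem \ref{PT1} in $H$ with this $U$ and the given weakness parameter $t$ produces $c_\ast=c_\ast(t,R_1,R_2)$ and yields $\|f_{\lceil c_\ast v\rceil}\|_H\le C\sigma_v(f_0,\CD_N(\Og_m))_H$ in the stated range. Since the WOMP residuals satisfy $\|f_k\|_H\le\|f_{k-1}\|_H$ (each $G_k$ projects onto a larger subspace), choosing an integer $c\ge c_\ast$ gives $\|f_{cv}\|_H\le\|f_{\lceil c_\ast v\rceil}\|_H$, which is \eqref{mp}, valid for $v\le u/(1+c)$.

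Finally I would pass from \eqref{mp} to \eqref{mp2}. Let $g^\ast\in\Sigma_v(\CD_N)$ be a near-best $L_\infty$ approximant to $f_0$, and let $\Phi(f_0)=G_{cv}(f_0,\CD_N(\Og_m))\in\Sigma_{cv}(\CD_N)$ be the WOMP output regarded as a function on $\Og$. By the triangle inequality, $\|f_{cv}\|_{L_2(\Og,\mu)}\le\|f_0-g^\ast\|_{L_2(\Og,\mu)}+\|g^\ast-\Phi(f_0)\|_{L_2(\Og,\mu)}$, and the first term is at most $\|f_0-g^\ast\|_\infty$ since $\mu$ is a probability measure. For the second term, $g^\ast-\Phi(f_0)\in\Sigma_{(1+c)v}\subset\Sigma_u$ (using $v\le u/(1+c)$), so \eqref{ud} gives $\|g^\ast-\Phi(f_0)\|_{L_2(\Og,\mu)}\le\sqrt2\,\|g^\ast-\Phi(f_0)\|_H$; then the triangle inequality in $H$, the bound \eqref{mp}, and $\sigma_v(f_0,\CD_N(\Og_m))_H\le\|f_0-g^\ast\|_H\le\|f_0-g^\ast\|_\infty$ control this by a constant multiple of $\|f_0-g^\ast\|_\infty$. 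Letting $g^\ast$ approach the $L_\infty$-best approximant yields \eqref{mp2}. I expect the only delicate point to be the first two steps: everything rests on upgrading \eqref{Riesz} to a discrete restricted Riesz inequality on $\Sigma_u$ via universal discretization, and one must keep the sparsity budgets consistent (namely $|A|+|J|\le u$ for UP and $(1+c)v\le u$ for the comparison) so that \eqref{ud} is always applicable; the remaining estimates are routine triangle-inequality bookkeeping.
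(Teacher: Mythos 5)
Your proposal is correct and follows essentially the same route the paper attributes to \cite{DTM2}: upgrade the Riesz condition \eqref{Riesz} to a discrete restricted Riesz inequality on $\Sigma_u(\CD_N)$ via the universal discretization \eqref{ud}, deduce the property {\bf UP($u,u$)} with $U$ depending only on $R_1,R_2$, apply Theorem \ref{PT1} in $L_2(\Og_m,\mu_m)$ to get \eqref{mp}, and transfer to $L_2(\Og,\mu)$ by the triangle inequality and \eqref{ud} again to get \eqref{mp2}. The only detail you leave implicit is that \eqref{ub} (together with the discretization and Riesz lower bounds) makes $\CD_N(\Og_m)$ a legitimate dictionary in $L_2(\Og_m,\mu_m)$ (nonzero elements of norm at most $1$), which is immediate.
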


Recall that  the modulus of smoothness of a Banach space  $X$ is defined as 
\begin{equation}\label{CG1}
\eta(w):=\eta(X,w):=\sup_{\sub{x,y\in X\\
		\|x\|= \|y\|=1}}\left(\f {\|x+wy\|+\|x-wy\|}2 -1\right),\  \ w>0,
\end{equation}
and that $X$ is called uniformly smooth  if  $\eta(w)/w\to 0$ when $w\to 0+$.
It is well known that the $L_p$ space with $1< p<\infty$ is a uniformly smooth Banach space with 
\be\label{CG2}
\eta(L_p,w)\le \begin{cases}(p-1)w^2/2, & 2\le p <\infty,\\   w^p/p,& 1\le p\le 2.
\end{cases}
\ee
 
 In addition to the formulated in Section \ref{DN} conditions on a system $\Psi$ we need one more property of the system $\Psi$.

{\bf $u$-term Nikol'skii inequality.} Let $1\le q\le p\le \infty$ and let $\Psi$ be a system from $L_p:=L_p(\Omega,\mu)$. For a natural number $u$ we say that the system $\Psi$ has 
the $u$-term Nikol'skii inequality for the pair $(q,p)$ with the constant $H$ if the following 
inequality holds 
\be\label{vNI}
\|f\|_p \le H\|f\|_q,\qquad \forall f\in \Sigma_u(\Psi).
\ee
 In such a case we write $\Psi \in NI(q,p,H,u)$. 
 
 Note, that obviously $H\ge 1$.

 \begin{Theorem}[{\cite[Theorem 3.1]{VT203}}]\label{NUT1}  Let  $\CD_N=\{\vi_j\}_{j=1}^N$ be  a uniformly bounded Riesz system  in $L_2(\Og, \mu)$  satisfying  \eqref{ub} and \eqref{Riesz} for some constants $0<R_1\leq R_2<\infty$. 
Let $\Og_m=\{\xi^1,\cdots, \xi^m\}$ be a finite subset of $\Og$  that provides the $L_2$-universal discretization for the collection 
$\cX_u(\CD_N)$,   $1\leq u\leq N$.   Assume in addition that $\D_N \in NI(2,p,H,u)$ with $p\in [2,\infty)$. Then, for a given  weakness parameter $0<t\leq 1$,   there exists a constant integer  $c=c(t,R_1,R_2)\ge 1$  such that for any integer $0\leq v\leq u/(1+c)$ and any $f_0\in \cC(\Omega)$,   the  
$$
\text{WOMP}\Bl(\D_N(\Og_m); t\Br)_{L_2(\Omega_m,\mu_m)}
$$    
applied to $f_0$  gives 
\be\label{mpn}
\|f_{cv}    \|_{L_2(\Omega_m,\mu_m)} \le C_0\sigma_v(f_0,\CD_N(\Omega_m))_{L_2(\Omega_m,\mu_m)}, 
\ee
%\be\label{mp2n}
%\|f_{c v}\|_{L_p(\Omega,\mu)} \le HC_1\sigma_v(f_0,\CD_N)_\infty,
%\ee
\be\label{mp3n}
\|f_{c v}\|_{L_p(\Omega,\mu)} \le HC_1\sigma_v(f_0,\CD_N)_{L_p(\Omega,\mu_\xi)},
\ee
where $C_i$, $i=0,1$, are absolute constants,  $f_k$ denotes the residual of $f_0$ after the $k$-th iteration of the algorithm, and $\mu_\xi$ is defined in (\ref{muxi}).
 \end{Theorem}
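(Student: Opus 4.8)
The plan is to notice first that the conclusion \eqref{mpn} is nothing new: it is exactly the discrete Lebesgue-type inequality already established in Theorem \ref{PT2} (its display \eqref{mp}). That bound uses only the Riesz property \eqref{Riesz} together with the universal discretization \eqref{ud} to verify the unconditionality property \textbf{UP}$(u,D)$ for the restricted system $\CD_N(\Og_m)$ in the Hilbert space $L_2(\Og_m,\mu_m)$, after which Theorem \ref{PT1} applies; the additional hypothesis $\CD_N\in NI(2,p,H,u)$ plays no role there. Thus the only genuinely new content is the $L_p(\Og,\mu)$ estimate \eqref{mp3n}, and the whole purpose of the Nikol'skii hypothesis is to convert an $L_2$ bound on a \emph{sparse} residual into an $L_p$ bound. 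I would therefore take \eqref{mpn} as given and spend all the effort on \eqref{mp3n}.

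For \eqref{mp3n}, first I would fix a near-best $v$-term $L_p(\Og,\mu_\xi)$ approximant, i.e. choose $g^\ast\in\Sigma_v(\CD_N)$ with
$$
\|f_0-g^\ast\|_{L_p(\Og,\mu_\xi)}\le 2\,\sigma_v(f_0,\CD_N)_{L_p(\Og,\mu_\xi)},
$$
and split the residual as $f_{cv}=(f_0-g^\ast)+\bigl(g^\ast-G_{cv}(f_0)\bigr)$. The second summand lies in $\Sigma_{(c+1)v}(\CD_N)$, and the admissible range $v\le u/(1+c)$ gives $(c+1)v\le u$, so it is a $u$-sparse element to which the $u$-term Nikol'skii inequality \eqref{vNI} applies:
$$
\|g^\ast-G_{cv}(f_0)\|_{L_p(\Og,\mu)}\le H\,\|g^\ast-G_{cv}(f_0)\|_{L_2(\Og,\mu)}.
$$
Next I would push this $L_2(\Og,\mu)$ norm onto the discrete measure: since the difference is $u$-sparse and $\Og_m$ provides $L_2$-universal discretization for $\cX_u(\CD_N)$, the lower bound in \eqref{ud} gives $\|g^\ast-G_{cv}(f_0)\|_{L_2(\Og,\mu)}\le\sqrt2\,\|g^\ast-G_{cv}(f_0)\|_{L_2(\Og_m,\mu_m)}$. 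By the triangle inequality and \eqref{mpn}, together with the fact that $g^\ast$ restricted to $\Og_m$ is itself a competitor for $\sigma_v(f_0,\CD_N(\Og_m))_{L_2(\Og_m,\mu_m)}$, this is controlled by a constant multiple of $\|f_0-g^\ast\|_{L_2(\Og_m,\mu_m)}$. Because $\mu_m$ is a probability measure and $p\ge2$, Hölder gives $\|f_0-g^\ast\|_{L_2(\Og_m,\mu_m)}\le\|f_0-g^\ast\|_{L_p(\Og_m,\mu_m)}$, and $\mu_m\le2\mu_\xi$ bounds the latter by $2^{1/p}\|f_0-g^\ast\|_{L_p(\Og,\mu_\xi)}$. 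For the first summand $f_0-g^\ast$ I would instead use $\mu\le2\mu_\xi$ to get $\|f_0-g^\ast\|_{L_p(\Og,\mu)}\le2^{1/p}\|f_0-g^\ast\|_{L_p(\Og,\mu_\xi)}$ directly. Adding the two pieces and absorbing the numerical factors into an absolute constant $C_1$ yields \eqref{mp3n}.

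The hard part will not be any single inequality but the bookkeeping that makes the chain close. One must keep the greedy approximant $G_{cv}(f_0)$ and the comparison element $g^\ast$ \emph{jointly} within the sparsity budget $u$, so that both the Nikol'skii inequality \emph{and} the universal discretization are legitimately applicable to their difference; this is exactly the source of the restriction $v\le u/(1+c)$. The second delicate point is the role of the averaged measure $\mu_\xi=(\mu+\mu_m)/2$: it is engineered so that $\mu$ and $\mu_m$ are each dominated by $2\mu_\xi$, which is what lets me pass from the true measure to the discrete measure for the sparse comparison term and back to the true measure for the error term, while measuring both against the single quantity $\sigma_v(f_0,\CD_N)_{L_p(\Og,\mu_\xi)}$. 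Everything else is the triangle inequality and the already-established discrete bound \eqref{mpn}.
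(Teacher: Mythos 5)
The paper does not actually prove Theorem \ref{NUT1}: it is imported verbatim from \cite[Theorem 3.1]{VT203}, so there is no in-paper proof to compare against. Your argument is correct and follows the same route the paper uses for the closely analogous results: \eqref{mpn} is exactly \eqref{mp} of Theorem \ref{PT2} (the Nikol'skii hypothesis is irrelevant there), and your derivation of \eqref{mp3n} --- splitting off a near-best $v$-term approximant $g^{\ast}$, applying the $u$-term Nikol'skii inequality and the one-sided half of \eqref{ud} to the $u$-sparse difference $g^{\ast}-G_{cv}(f_0)$, then closing the chain with \eqref{mpn} and the domination of $\mu$ and $\mu_m$ by $2\mu_\xi$ --- is precisely the mechanism behind \eqref{D13} in Theorem \ref{NUT3}. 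The only points worth a remark are cosmetic: the case $v=0$ is trivial, and the lift of the discrete projection $G_{cv}(f_0)$ from $\Og_m$ back to $\Og$ is well defined because a $u$-sparse function vanishing on $\Og_m$ vanishes in $L_2(\Og,\mu)$ by \eqref{ud}.
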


 For brevity denote $L_p(\xi) := L_p(\Omega_m,\mu_m)$, where $\Omega_m=\{\xi^\nu\}_{\nu=1}^m$  and  $\mu_m(\xi^\nu) =1/m$, $\nu=1,\dots,m$. Let 
$B_v(f,\D_N,L_p(\xi))$ denote the best $v$-term approximation of $f$ in the $L_p(\xi)$ norm with 
respect to the system $\D_N$. Note that $B_v(f,\D_N,L_p(\xi))$ may not be unique. Obviously,
\be\label{D5}
\|f-B_v(f,\D_N,L_p(\xi))\|_{L_p(\xi)} = \sigma_v(f,\D_N)_{L_p(\xi)}.
\ee

We proved in \cite{VT203} the following theorem.

 \begin{Theorem}[{\cite[Theorem 3.3]{VT203}}]\label{NUT3} Let $2\le p<\infty$ and let $m$, $v$, $N$ be given natural numbers such that $2v\le N$.  Let $\D_N\subset \C(\Og)$ be  a system of $N$ elements such that $\D_N \in NI(2,p,H,2v)$. Assume that  there exists a set $\xi:= \{\xi^j\}_{j=1}^m \subset \Omega $, which provides the one-sided $L_2$-universal discretization 
  \be\label{D12}
 \|f\|_2 \le D\left(\frac{1}{m} \sum_{j=1}^m |f(\xi^j)|^2\right)^{1/2}, \quad \forall\, f\in \Sigma_{2v}(\D_N), 
\ee
  for the collection $\cX_{2v}(\D_N)$. Then for   any  function $ f \in \C(\Omega)$ we have
\be\label{D13}
  \|f-B_v(f,\D_N,L_2(\xi))\|_{L_p(\Omega,\mu)} \le 2^{1/p}(2DH +1) \sigma_v(f,\D_N)_{L_p(\Og, \mu_\xi)}
 \ee
 and
 \be\label{D14}
  \|f-B_v(f,\D_N,L_2(\xi))\|_{L_p(\Omega,\mu)} \le (2DH +1) \sigma_v(f,\D_N)_\infty.
 \ee
 
 \end{Theorem}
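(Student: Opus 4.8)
The plan is a triangle-inequality argument whose crux is a single observation: the difference of any two $v$-term approximants belongs to $\Sigma_{2v}(\D_N)$, which is exactly the set on which both the Nikol'skii inequality $NI(2,p,H,2v)$ and the one-sided discretization (\ref{D12}) are postulated. This is precisely why the hypotheses are stated with $2v$ rather than $v$, and it is the only place where the structural assumptions enter.

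First I would abbreviate $b:=B_v(f,\D_N,L_2(\xi))$ and fix a best $v$-term approximant $g\in\Sigma_v(\D_N)$ of $f$ in the norm appropriate to each of the two claims (such a best approximant exists, as a minimizer over the finitely many $v$-element subsystems of $\D_N$). Since $b,g\in\Sigma_v(\D_N)$, their difference satisfies $b-g\in\Sigma_{2v}(\D_N)$, and the triangle inequality gives
$$
\|f-b\|_{L_p(\Og,\mu)}\le \|f-g\|_{L_p(\Og,\mu)}+\|b-g\|_{L_p(\Og,\mu)}.
$$
For the second term I chain the two structural hypotheses applied to $b-g\in\Sigma_{2v}(\D_N)$: the Nikol'skii inequality gives $\|b-g\|_{L_p(\Og,\mu)}\le H\|b-g\|_{L_2(\Og,\mu)}$, and (\ref{D12}) then gives $\|b-g\|_{L_2(\Og,\mu)}\le D\|b-g\|_{L_2(\xi)}$. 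Finally, since $b$ is the best $v$-term $L_2(\xi)$-approximant and $g$ is a competitor, $\|f-b\|_{L_2(\xi)}\le\|f-g\|_{L_2(\xi)}$, so a triangle inequality in $L_2(\xi)$ yields $\|b-g\|_{L_2(\xi)}\le 2\|f-g\|_{L_2(\xi)}$. Collecting these estimates,
$$
\|f-b\|_{L_p(\Og,\mu)}\le \|f-g\|_{L_p(\Og,\mu)}+2DH\|f-g\|_{L_2(\xi)}.
$$

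It remains to pass from this mixed right-hand side to a single quantity. For (\ref{D14}) I would take $g$ to be a best $v$-term $L_\infty$-approximant; since $\mu$ and $\mu_m$ are probability measures, $\|f-g\|_{L_p(\Og,\mu)}\le\|f-g\|_\infty$ and $\|f-g\|_{L_2(\xi)}\le\|f-g\|_\infty=\sigma_v(f,\D_N)_\infty$, which immediately produces the factor $(2DH+1)$. For (\ref{D13}) I would instead take $g$ best in $L_p(\Og,\mu_\xi)$ and use the identity $\|\phi\|_{L_p(\Og,\mu_\xi)}^p=\tfrac12\|\phi\|_{L_p(\Og,\mu)}^p+\tfrac12\|\phi\|_{L_p(\xi)}^p$ coming from $\mu_\xi=(\mu+\mu_m)/2$, together with the monotonicity $\|\phi\|_{L_2(\xi)}\le\|\phi\|_{L_p(\xi)}$ valid for $p\ge 2$ on the probability space $(\Og,\mu_m)$. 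These give $\|f-g\|_{L_p(\Og,\mu)}\le 2^{1/p}\|f-g\|_{L_p(\Og,\mu_\xi)}$ and $\|f-g\|_{L_2(\xi)}\le 2^{1/p}\|f-g\|_{L_p(\Og,\mu_\xi)}$, so the displayed bound becomes $\|f-b\|_{L_p(\Og,\mu)}\le 2^{1/p}(2DH+1)\sigma_v(f,\D_N)_{L_p(\Og,\mu_\xi)}$.

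I do not expect a serious obstacle; the argument is essentially bookkeeping across the three measures $\mu$, $\mu_m$, and $\mu_\xi$. The one point requiring care — and the heart of the matter — is that both $NI(2,p,H,2v)$ and (\ref{D12}) must be invoked for the $2v$-term element $b-g$, not for a $v$-term element, so keeping the term count at $2v$ throughout is what makes the chain legitimate. The auxiliary monotonicity step $\|\cdot\|_{L_2(\xi)}\le\|\cdot\|_{L_p(\xi)}$, where the restriction $p\ge 2$ is used, is the only other detail that must be stated explicitly.
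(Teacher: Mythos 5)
Your argument is correct and, judging by the fact that it reproduces the constants $2^{1/p}(2DH+1)$ and $(2DH+1)$ exactly, it is essentially the proof from \cite{VT203}: triangle inequality with a competitor $g\in\Sigma_v(\D_N)$, the observation that $b-g\in\Sigma_{2v}(\D_N)$ so that $NI(2,p,H,2v)$ and (\ref{D12}) may be chained, the near-best property $\|b-g\|_{L_2(\xi)}\le 2\|f-g\|_{L_2(\xi)}$, and the factor $2^{1/p}$ coming from $\mu_\xi=(\mu+\mu_m)/2$ together with $\|\cdot\|_{L_2(\xi)}\le\|\cdot\|_{L_p(\xi)}$ for $p\ge 2$. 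No gaps.
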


\section{New results}
\label{N}

We will use some known general results on best $v$-term approximations with respect to 
an arbitrary system in a Banach space. Usually, these results are proved in the case of real Banach spaces. It is convenient for us to consider complex Banach spaces, partially, because of our applications to the special case, when the system of interest is the trigonometric system $\Tr^d$. We now prove the complex version of the result known in the case of real Banach spaces. For a system $\D:=\{g\}$ denote
 $$
 A_1(\D) := \left\{f:\, f=\sum_{i=1}^\infty a_ig_i,\quad g_i\in \D,\quad \sum_{i=1}^\infty |a_i| \le 1 \right\}.
 $$

\begin{Lemma}\label{NLv} Let $\D:=\{g\}$ be a system of elements in a complex Banach space $X$ satisfying $\|g\|_X \le 1$, $g\in \D$. Assume that $X$ has the modulus of smoothness $\eta(X,w) \le \gamma w^q$, $1<q\le 2$. Then, there exists a constant $C(q,\gamma)$, which may only depend on $q$ and $\gamma$, such that
\be\label{Nv}
\sigma_v(A_1(\D),\D)_X \le C(q,\gamma) (v+1)^{1/q-1}.
\ee
Moreover, bound (\ref{Nv}) is provided by a constructive method based on greedy algorithms. 
\end{Lemma}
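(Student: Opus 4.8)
The plan is to deduce the complex statement from its well-known real-variable analogue and then quote the standard convergence rate of greedy algorithms in uniformly smooth Banach spaces. First I would pass from complex to real scalars: view $X$ as a real Banach space $X_{\R}$ by restriction of scalars. The norm, and hence the unit sphere, are literally unchanged, and since the modulus of smoothness (\ref{CG1}) is a supremum over unit vectors $x,y$ and a positive real parameter $w$, we have $\eta(X_{\R},w)=\eta(X,w)\le \gamma w^q$. Thus $X_{\R}$ is a real Banach space carrying the same power-type smoothness, and all the real-variable greedy theory applies to it verbatim.

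Next I would symmetrize the dictionary. Put $\D^{\pm}:=\{e^{i\theta}g:\ g\in\D,\ \theta\in\R\}\subset X_{\R}$; every such element still has $X$-norm $\le 1$, and $\D^{\pm}$ is symmetric because $-e^{i\theta}g=e^{i(\theta+\pi)}g$. Writing $a_i=|a_i|e^{i\theta_i}$ shows that the complex class $A_1(\D)$ equals the real $A_1$-class $A_1(\D^{\pm})=\overline{\conv}(\D^{\pm}\cup\{0\})$ of $X_{\R}$. Moreover, a real $v$-term combination $\sum_{j\le v}c_j(e^{i\theta_j}g_j)$ is exactly the complex $v$-term combination $\sum_{j\le v}(c_je^{i\theta_j})g_j$ of elements of $\D$, while conversely $\sum_{j\le v} b_jg_j=\sum_{j\le v}|b_j|(e^{i\arg b_j}g_j)$. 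Hence $\Sigma_v(\D^{\pm})=\Sigma_v(\D)$ as subsets of $X$, and therefore $\sigma_v(f,\D)_X=\sigma_v(f,\D^{\pm})_{X_{\R}}$ for every $f$. (The coincidence survives the degenerate cases where two selected elements share the same underlying $g$, since then terms collapse and one lands in a $\Sigma_{v'}(\D)\subseteq\Sigma_v(\D)$ with $v'<v$.)

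With these identifications the assertion reduces to the classical estimate for a symmetric dictionary in a real Banach space with $\eta(w)\le\gamma w^q$, $1<q\le 2$, namely $\sigma_v(A_1(\D^{\pm}),\D^{\pm})_{X_{\R}}\le C(q,\gamma)(v+1)^{1/q-1}$, realized constructively by the Weak Chebyshev Greedy Algorithm. I would invoke this from \cite{VTbook}: running the WCGA with a fixed weakness parameter (its dependence absorbed into $C(q,\gamma)$) on any $f\in A_1(\D^{\pm})$ produces residuals satisfying $\|f_v\|_{X_{\R}}\le C(q,\gamma)(v+1)^{1/q-1}$, and since $G_v=f-f_v$ is a $v$-term approximant from $\D^{\pm}$, i.e.\ from $\D$, we get $\sigma_v(f,\D)_X\le\|f_v\|_{X_{\R}}\le C(q,\gamma)(v+1)^{1/q-1}$. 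Taking the supremum over $f\in A_1(\D)$ yields (\ref{Nv}); the value $v=0$ is handled trivially, since $\|f\|_X\le\sum_i|a_i|\,\|g_i\|_X\le 1$.

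The genuine analytic content — the $v^{1/q-1}$ greedy convergence rate — is imported wholesale from the known real theory, so the only thing requiring care here is the scalar-restriction bookkeeping. The step most prone to slips, and thus the one I view as the \emph{main obstacle}, is the exact matching of the approximation classes $\Sigma_v(\D)=\Sigma_v(\D^{\pm})$ together with $A_1(\D)=A_1(\D^{\pm})$ and the invariance $\eta(X_{\R},w)=\eta(X,w)$; once these three identifications are checked verbatim, the bound follows immediately from the cited real-Banach-space result.
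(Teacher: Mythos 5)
Your argument is correct, and it reaches the bound by a genuinely different reduction than the paper's. The paper splits each dictionary element into real and imaginary parts, forms the two real systems $\D^r_1=\{g^R\}$, $\D^r_2=\{g^I\}$ inside real subspaces $X^r_i$ of $X$, applies the known real-space estimate to each, and then reassembles $f=f^R+if^I$, paying a factor $\sqrt2$ on the coefficients and ending with a $4v$-term approximant. You instead keep the dictionary elements intact and saturate the dictionary under unimodular rotations, $\D^{\pm}=\{e^{i\theta}g\}$, which is symmetric in $X_{\R}$ and yields the exact identifications $A_1(\D)=A_1(\D^{\pm})$ and $\Sigma_v(\D^{\pm})\subseteq\Sigma_v(\D)$ (each $e^{i\theta}g$ being a complex scalar multiple of $g$), so the real WCGA rate transfers with no loss in the number of terms and no extra constants; it also keeps the approximant manifestly inside $\Sigma_v(\D)$, which in the paper's construction requires re-expressing combinations of $g^R$ and $g^I$ in terms of $\D$. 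Two small points you should make explicit: to quote the real theory verbatim, run the WCGA in the closed real span of $\D^{\pm}$ rather than in all of $X_{\R}$, so that $\D^{\pm}$ is a genuine dictionary there --- harmless, since the modulus of smoothness of a subspace is dominated by that of $X$ (the same observation the paper makes for its $X^r_i$); and $\D^{\pm}$ is uncountable (or, if you prefer, replace it by the countable symmetrization $\{\pm g,\pm ig\}$ at the cost of a factor $\sqrt 2$), which the WCGA with a fixed weakness parameter $t<1$ accommodates. With these remarks added, your proof is complete.
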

\begin{proof} In the case of real Banach spaces Lemma \ref{NLv} is known (see, for instance, \cite{VTbook}, p.342). The corresponding bound is provided by the Weak Chebyshev Greedy Algorithm. We derive the complex case from the real one. 
Denote $g^R :=$ Re$(g)$, $g^I :=$ Im$(g)$ the real and imaginary parts of $g$, and consider the systems
$$
\D^r_1 := \{g^R\, :\, g\in \D\},\quad \D^r_2 := \{g^I\, :\, g\in \D\}.
$$
Define $X^r_i$, $i=1,2$, to be the closure in the space $X$ of the span with respect to real numbers of $\D^r_i$, $i=1,2$. Then, clearly, $\eta(X^r_i,w) \le \eta(X,w)$, $i=1,2$. By the known real Banach space version of Lemma \ref{NLv} we obtain
\be\label{Nv1}
\sigma_v(A_1(\D^r_i),\D^r_i)_X \le C_1(q,\gamma) (v+1)^{1/q-1},\quad i=1,2.
\ee
Next, let $f\in X$ be such that 
$$
f=\sum_{k=1}^\infty c_kg_k,\quad g_k \in \D,\quad \sum_{k=1}^\infty |c_k| \le 1.
$$
Writing $c_k = a_k +ib_k$, we obtain
$$
f = \sum_{k=1}^\infty (a_kg^R_k -b_kg^I) +i \sum_{k=1}^\infty (a_kg^I_k +b_kg^R)=: f^R + i f^I.
$$
Clearly,
$$
|a_k|+|b_k| \le \sqrt{2}(a_k^2+b_k^2)^{1/2} =  \sqrt{2}|c_k|.
$$
Therefore, 
$$2^{-1/2}f^R\in \conv(A_1(\D^r_1),A_1(\D^r_2)),\qquad 2^{-1/2}f^I\in \conv(A_1(\D^r_1,A_1(\D^r_2)),
$$ 
and 
$$
\sigma_{4v}(A_1(\D),\D)_X \le 2\max_{i}\sigma_v(A_1(\D^r_i),\D^r_i)_X. 
$$
This and (\ref{Nv1}) imply Lemma \ref{NLv}.

\end{proof}

Let $Q$ be a finite set of points in $\mathbb Z^d$, we denote
$$
\Psi(Q) :=\left\{ g : g(\mathbf x) =\sum_{\mathbf k\in Q}a_{\mathbf k}
\psi_\bk\right\} .
$$

\begin{Theorem}\label{NT0}   Let $1<p<\infty$, $a>0$, $b\in \R$, and $v\in\N$. Assume that $\|\psi_\bk\|_{L_p(\Omega,\mu)} \le 1$, $\bk\in\Z^d$, with some probability measure $\mu$ on $\Omega$. Then there exist two constants $c=c(a,\bt,d,p)$, $C=C(a,b,\bt,d,p)$, such that for a given $v\in \N$ there is a set $Q\subset \Z^d$, $|Q| \le v^{c}$, which does not depend on measure $\mu$ with the following property.
There is   
a constructive method $A_{v,\mu}$ based on greedy algorithms, which provides a $v$-term approximant 
from $\Psi(Q)$  with 
the bound for $f\in \bW^{a,b}_{A_\bt}(\Psi)$
$$
\|f-A_{v,\mu}(f)\|_{L_p(\Omega,\mu)} \le C   v^{-a+1/p^*-1/\bt} (\log(2 v))^{(d-1)(a+b)},\quad p^*:=\min(p,2).      
$$
\end{Theorem}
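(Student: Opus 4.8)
The plan is to approximate $f=\sum_{j\ge 0}f_j$ one hyperbolic layer at a time and then to choose how many terms to spend on each layer by a constrained optimization. Write $M_j:=2^{-aj}(\bar j)^{(d-1)b}$, so that the defining condition \eqref{R10} reads $|f_j|_{A_\bt(\Psi)}\le M_j$; let $D_j\asymp 2^{j}(\bar j)^{d-1}$ be the number of frequencies $\bk\in\rho(\bs)$ with $\|\bs\|_1=j$; and set $\alpha:=1/p^*-1/\bt$, which is negative since $1/p^*<1\le 1/\bt$. The core estimate I would establish is the per-layer bound $\sigma_n(f_j,\Psi)_{L_p(\Omega,\mu)}\le C M_j(n+1)^{\alpha}$ for $1\le n\le D_j$ (with zero error once $n\ge D_j$), proved in two constructive stages. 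First a Stechkin step: discarding all but the $N$ largest coefficients $a_\bk(f)$ of $f_j$ leaves a residual of $A_1(\Psi)$-norm $\le C_\bt M_j N^{1-1/\bt}$, since ordering the coefficients gives $|a_{(n)}|\le M_j n^{-1/\bt}$ and summing the tail yields $\sum_{n>N}|a_{(n)}|\le C_\bt M_j N^{1-1/\bt}$ (for $\bt=1$ this is the trivial bound $\le M_j$ and stage one is vacuous). Second, as $\|\psi_\bk\|_{L_p}\le 1$ the normalized residual lies in $A_1(\Psi)$, so Lemma \ref{NLv} with $q=p^*$ (via the modulus of smoothness \eqref{CG2}) yields a $w$-term greedy approximant with error $\le C M_j N^{1-1/\bt}(w+1)^{1/p^*-1}$; balancing $N\asymp w\asymp n$ produces the exponent $1-1/\bt+1/p^*-1=\alpha$.

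Summing the layer errors by the triangle inequality in $L_p$, I reduce the theorem to minimizing $\sum_j M_j(n_j+1)^{\alpha}$ over nonnegative integers with $\sum_j n_j\le v$ and the \emph{hard} constraint $n_j\le D_j$. This last constraint is the crux: without it the unconstrained optimum $n_j\propto M_j^{1/(1-\alpha)}$ would heap almost all terms onto the lowest layers, where $D_j$ is tiny, and would only yield the much weaker exponent $v^{\alpha}$. With the constraint, the low layers saturate. Concretely I take $n_j=D_j$ (an exact, zero-error representation, legitimate because all these frequencies sit in the fixed set $Q$) for $j$ below a critical level $j^*$, spend the remaining budget on the greedy approximation above, and drop the far tail.

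The parameters are then forced. The cost $\sum_{j\le j^*}D_j\asymp 2^{j^*}(\bar{j^*})^{d-1}\asymp v$ gives $j^*\asymp\log v$ and $2^{j^*}\asymp v(\log(2v))^{-(d-1)}$. The discarded tail contributes $\sum_{j>J'}\|f_j\|_{L_p}\le\sum_{j>J'}M_j$, which is geometrically small because $a>0$. The dominant term comes from the layers just above saturation and is of order $M_{j^*}D_{j^*}^{\alpha}=2^{(\alpha-a)j^*}(\bar{j^*})^{(d-1)(b+\alpha)}$; inserting $2^{j^*}\asymp v(\log(2v))^{-(d-1)}$ and $\bar{j^*}\asymp\log(2v)$ and collecting the logarithms yields $v^{-a+1/p^*-1/\bt}(\log(2v))^{(d-1)(a+b)}$, as claimed. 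I take $Q$ to be the union of all frequencies in the layers $j\le J'$ with $J'\asymp C\log v$; it depends only on the block structure, hence not on $\mu$, and $|Q|\asymp 2^{J'}(\bar{J'})^{d-1}\le v^{c}$. The construction — exact retention of the low layers, coefficient thresholding, and the Weak Chebyshev Greedy Algorithm of Lemma \ref{NLv} on each critical-band layer — is constructive and stays inside $\Psi(Q)$.

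The step I expect to be most delicate is the allocation analysis that delivers the exact power $(\log(2v))^{(d-1)(a+b)}$: one must carry the polynomial factors $(\bar j)^{(d-1)b}$ and $(\bar j)^{(d-1)\alpha}$ honestly through the constrained optimization, and confirm that the greedily approximated layers beyond $j^*$ contribute no more than the boundary layer, which rests on the convergence of $\sum_{j\ge j^*}M_j^{1/(1-\alpha)}$ (guaranteed by $a/(1-\alpha)>0$). A secondary care point is matching the two regimes at $j^*$, where the interior allocation $n_{j^*}\asymp\kappa M_{j^*}^{1/(1-\alpha)}$ should meet $D_{j^*}$, and checking that $n_j\ge 1$ fails only for $j\gtrsim\log v$, which is what keeps $|Q|$ polynomial in $v$.
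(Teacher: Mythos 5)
Your proposal is correct and follows essentially the same route as the paper: the same layer decomposition $f=\sum_j f_j$, the same two-stage per-layer estimate (Stechkin thresholding of the $\bt$-summable coefficients followed by the greedy $A_1$-bound of Lemma \ref{NLv} in the $p^*$-smooth space $L_p$), exact retention of all layers up to $j^*\asymp\log v$ with $2^{j^*}\asymp v(\log(2v))^{-(d-1)}$, geometrically decaying budgets above, and the same bookkeeping yielding the exponent $(d-1)(a+b)$ on the logarithm. The only difference is presentational: you phrase the allocation as a constrained optimization with the hard caps $n_j\le D_j$, while the paper simply posits $v_j=[2^{n-\al(j-n)}j^{d-1}]$ with $\al(1/\bt-1/p^*)=a/2$ and verifies the sum — the resulting allocation and dominant boundary-layer term are identical.
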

\begin{proof}  We prove the theorem for $v\asymp 2^nn^{d-1}$, $n\in \N$. Let $f\in \bW^{a,b}_{A_\bt}(\Psi)$ and
$$
f=\sum_{\bk}a_\bk(f) \psi_\bk
$$
be its representation satisfying (\ref{R10}). We approximate $f_j$ in $L_p(\Omega,\mu)$ by 
a $2v_j$-term approximant from $\Psi(\Delta Q_j)$, $\Delta Q_j := \cup_{\|\bs\|_1=j} \rho(\bs)$. The sequence $\{v_j\}$ will be defined later. For any sequence $\{v_j\}_{j=n}^\infty$ of
 nonnegative integers such that $v_n+\sum_{j>n} 2v_j \le v$ we obtain
 $$
 \sigma_v(f,\Psi)_p \le \sum_{j=n+1}^\infty \sigma_{2v_j}(f_j, \Psi(\Delta Q_j))_p + \sigma_{v_n}(f_n,\Psi(Q_n))_p,\quad Q_n := \cup_{\|\bs\|_1\le n} \rho(\bs).
 $$
We now estimate $\sigma_{2v_j}(f_j, \Psi(\Delta Q_j))_p$ for $f\in \bW^{a,b}_{A_\bt}(\Psi)$. 
 We make this approximation in two steps. First, we take $v_j$ coefficients $a_\bk(f)$, $\bk\in \Delta Q_j$, with the largest absolute values. Denote $I(j)$ the corresponding set of indexes $\bk$. Then, using 
 the known results (see, for instance, \cite{DTU}, p.114, Lemma 7.4.1), we conclude from (\ref{R10}) that
 \be\label{R11}
 \sum_{\bk \in \Delta Q_j\setminus I(j)} |a_\bk| \le (v_j+1)^{1-1/\bt}2^{-aj} (\bar j)^{(d-1)b}.
 \ee
Since   $L_p(\Og, \mu)$ is a Banach space with $\rho(w,L_p) \le C(p)w^{p^*}$ and by our assumption 
 	\be\label{uba}
	\|\psi_\bk\|_{L_p(\Og, \mu)}\leq   1,\   \bk \in \Delta Q_j,
	\ee
 	using (\ref{Nv}), we deduce from \eqref{R11}
 	 that for every $j$  there exists an $h_j\in\Sigma_{v_j}(\Psi(\Delta Q_j))$ (provided by a constructive method based on greedy algorithms)
 	such that
 $$
 \|f_j-h_j\|_{L_p(\Og, \mu)}\leq C(p) (v_j+1)^{1/p^*-1/\bt}2^{-aj}(\bar j)^{(d-1)b}.
 $$
 We take   $\al$ such that $\al(1/\bt-1/p^*)=a/2$ and specify
$$
v_j := [2^{n-\al (j-n)}j^{d-1}],\quad j=n+1,\dots.
$$
It is clear that there exists $c_0(\al,d)\in\N$ such that $v_j=0$ for $j\ge c_0(\al,d)n$. Define $Q:= Q_{c_0(\al,d)n}$.
We point out that $Q$ does not depend on $\mu$. 
For $j\ge c_0(\al,d)n$
we set $h_j=0$.
In addition to $\{h_j\}_{j=n+1}^{c_0(\al,d)n}$ we include in the approximant 
$$
S_n(f,\Psi) := \sum_{\|\bs\|_1\le n}\delta_\bs(f,\Psi).
$$
Define
$$
A_v(f,\al) := S_n(f,\Psi)+\sum_{j > n} h_j = S_n(f,\Psi)+\sum_{j = n+1}^{c_0(\al,d)n} h_j .
$$
Clearly, $A_v(f,\al)\in \Psi(Q)$. We have built a $v$-term approximant of $f$ with 
$$
v\ll 2^nn^{d-1}  +\sum_{j> n} 2v_j \ll 2^nn^{d-1}.
$$
 The error   of this approximation in the $L_p(\Omega,\mu)$ is bounded from above by ($\kappa := 1/\bt-1/p^*$)
$$ 
\|f-A_v(f,\al)\|_{L_p(\Omega,\mu)} \le \sum_{j> n} \|f_j-h_j\|_{L_p(\Omega,\mu)}  
$$
$$
\ll \sum_{j> n} (v_j+1)^{-\kappa}2^{-aj}j^{(d-1)b}
\ll \sum_{j>  n}2^{-\kappa(n-\al(j-n))}j^{-(d-1)\kappa}2^{-aj}j^{(d-1)b} 
$$
$$
 \ll 2^{-n(a+\kappa)}n^{(d-1)(b-\kappa)}\ll v^{-a+1/p^*-1/\bt} (\log(2 v))^{(d-1)(a+b)}.
$$
This completes the proof of Theorem \ref{NT0}.

\end{proof}

The following Lemma \ref{NL1} is a corollary of Theorem \ref{NT0}.

\begin{Lemma}\label{NL1}  Assume that $\Psi$ is a uniformly bounded system of functions
\be\label{R8}
|\psi_\bk(\bx)| \le 1,\quad \bx \in \Omega, \quad \bk \in \Z^d.
\ee
Let $1<p<\infty$, $a>0$, $b\in \R$, and $v\in\N$. There exist two constants $c^*=c(a,\bt,d,p)$, $C=C(a,b,\bt,d,p)$, and a set $Q\subset \Z^d$, $|Q| \le v^{c^*}$,  
such that for any probability measure $\mu$ and any finite subset $\xi\subset \Omega$   there is   
a constructive method $A_{v,\xi}$ based on greedy algorithms, which provides a $v$-term approximant 
from $\Psi(Q)$  with 
the bound for $f\in \bW^{a,b}_{A_\bt}(\Psi)$
$$
\|f-A_{v,\xi}(f)\|_{L_p(\Omega,\mu_\xi)} \le C   v^{-a+1/p^*-1/\bt} (\log(2 v))^{(d-1)(a+b)},\quad p^*:=\min(p,2).      
$$
\end{Lemma}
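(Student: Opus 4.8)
The plan is to reduce Lemma \ref{NL1} directly to Theorem \ref{NT0} by applying the latter with the averaged measure $\mu_\xi$ in place of $\mu$. The only gap to bridge is that Theorem \ref{NT0} requires the hypothesis $\|\psi_\bk\|_{L_p(\Omega,\nu)}\le 1$ for the measure $\nu$ in question, whereas here we are given only the pointwise bound \eqref{R8}. But pointwise boundedness is the stronger condition: for any probability measure $\nu$ on $\Omega$,
$$
\|\psi_\bk\|_{L_p(\Omega,\nu)}^p=\int_\Omega|\psi_\bk|^p\,d\nu\le \int_\Omega 1\,d\nu=1,
$$
so the $L_p$-normalization holds automatically and uniformly in $\nu$. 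This is the observation that converts the measure-specific hypothesis of Theorem \ref{NT0} into the measure-free hypothesis of the lemma.

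First I would observe that $\mu_\xi=\tfrac12(\mu+\mu_m)$, as defined in \eqref{muxi}, is itself a probability measure on $\Omega$: it is the average of the probability measure $\mu$ and the uniform probability measure $\mu_m=\frac1m\sum_{j=1}^m\delta_{\xi^j}$ supported on the finite set $\xi$, and one checks $\mu_m(\Omega)=1$. Consequently, by the display above, $\|\psi_\bk\|_{L_p(\Omega,\mu_\xi)}\le 1$ for every $\bk\in\Z^d$, so the hypothesis of Theorem \ref{NT0} is satisfied with $\mu$ replaced by $\mu_\xi$. Then I would apply Theorem \ref{NT0} with this choice of measure: it produces constants $c=c(a,\bt,d,p)$ and $C=C(a,b,\bt,d,p)$, a set $Q\subset\Z^d$ with $|Q|\le v^{c}$, and a constructive method $A_{v,\mu_\xi}$ delivering a $v$-term approximant from $\Psi(Q)$ with
$$
\|f-A_{v,\mu_\xi}(f)\|_{L_p(\Omega,\mu_\xi)}\le C\,v^{-a+1/p^*-1/\bt}(\log(2v))^{(d-1)(a+b)}
$$
for all $f\in\bW^{a,b}_{A_\bt}(\Psi)$. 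Setting $A_{v,\xi}:=A_{v,\mu_\xi}$ and $c^*:=c$ then yields exactly the assertion of Lemma \ref{NL1}.

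The one point that must be checked carefully---and which is the genuine content of the reduction---is that the set $Q$ can be chosen independently of $\xi$. This is guaranteed by Theorem \ref{NT0}, whose statement explicitly asserts that $Q$ does not depend on the measure: in its proof $Q=Q_{c_0(\al,d)n}$ is determined only by $v$ (through $n$, via $v\asymp 2^nn^{d-1}$) and by the exponent $\al$ fixed through $\al(1/\bt-1/p^*)=a/2$, none of which sees the measure. Hence a single $Q$ serves for all probability measures $\mu$ and all finite subsets $\xi$ simultaneously, which is precisely the universality claimed in the lemma. I do not expect any genuine analytic obstacle here: the lemma is essentially a repackaging of Theorem \ref{NT0} in which the hypothesis ``$L_p$-normalized for one measure'' is traded for the stronger, measure-free hypothesis ``pointwise bounded,'' thereby making the conclusion hold uniformly over the family of measures $\mu_\xi$ arising from the discretization points.
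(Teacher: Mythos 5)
Your proposal is correct and coincides with the paper's own (very short) proof: the paper likewise observes that the pointwise bound \eqref{R8} gives $\|\psi_\bk\|_{L_p(\Omega,\pi)}\le 1$ for every probability measure $\pi$, and then applies Theorem \ref{NT0} with $\mu_\xi$ in place of $\mu$, the independence of $Q$ from the measure being already asserted in that theorem. Your additional checks (that $\mu_\xi$ is a probability measure and that $Q$ depends only on $v$, $\al$) are exactly the right details to verify.
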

\begin{proof} Condition (\ref{R8}) implies that for any probability measure $\pi$ on $\Omega$ we have 
$\|\psi_\bk\|_{L_p(\Omega,\pi)} \le 1$, $\bk\in\Z^d$. Thus we can apply Theorem \ref{NT0} with 
the measure $\mu_\xi$ instead of measure $\mu$. 
\end{proof}

 We now proceed to a new result on sampling recovery. In the proof of  Theorem \ref{RT3} below we need the following known result on the universal discretization.
 
  \begin{Theorem}[{\cite{DTM2}}]\label{RT2} Let $1\le p\le 2$. Assume that $ \D_N=\{\ff_j\}_{j=1}^N\subset \cC(\Og)$ is a  system  satisfying  the conditions  \eqref{ub} and   \eqref{Bessel} for some constant $K\ge 1$. Let $\xi^1,\cdots, \xi^m$ be independent 
 	random points on $\Og$  that are  identically distributed  according to  $\mu$. 
 	 Then there exist constants  $C=C(p)>1$ and $c=c(p)>0$ such that 
 	  given any   integers  $1\leq u\leq N$ and 
 	 $$
 	 m \ge  C Ku \log N\cdot (\log(2Ku ))^2\cdot (\log (2Ku )+\log\log N),
 	 $$
 	 the inequalities 
 	 \begin{equation}\label{Ex2}
 	 \frac{1}{2}\|f\|_p^p \le \frac{1}{m}\sum_{j=1}^m |f(\xi^j)|^p \le \frac{3}{2}\|f\|_p^p,\   \   \ \forall f\in  \Sigma_u(\D_N)
 	 \end{equation}
 hold with probability $\ge 1-2 \exp\Bl( -\f {cm}{Ku\log^2 (2Ku)}\Br)$.
\end{Theorem}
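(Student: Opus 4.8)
The plan is to prove (\ref{Ex2}) as a single uniform concentration statement for an empirical process indexed jointly by the support $A$ and the function $f$, combining a Nikol'skii-type bound with symmetrization and a chaining estimate.

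\textbf{Reduction and a Nikol'skii inequality.} By the homogeneity of both sides of (\ref{Ex2}) it suffices to bound
$$
Z:=\sup_{\substack{A\subset\{1,\dots,N\}\\ |A|=u}}\ \sup_{\substack{f\in V_A(\D_N)\\ \|f\|_p=1}}\ \Bigl|\frac1m\sum_{j=1}^m |f(\xi^j)|^p-\|f\|_p^p\Bigr|
$$
by $1/2$. The first thing I would establish is a Nikol'skii inequality on $\Sigma_u(\D_N)$: if $f=\sum_{j\in A}a_j\ff_j$ with $|A|=u$, then by (\ref{ub}) and Cauchy--Schwarz $\|f\|_\infty\le\sum_{j\in A}|a_j|\le u^{1/2}\bigl(\sum_{j\in A}|a_j|^2\bigr)^{1/2}$, and by (\ref{Bessel}) the last sum is $\le K\|f\|_2^2$, so $\|f\|_\infty\le (Ku)^{1/2}\|f\|_2$. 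Interpolating $\|f\|_2^2\le\|f\|_\infty^{2-p}\|f\|_p^p$ and substituting yields $\|f\|_\infty\le (Ku)^{1/p}\|f\|_p$. Hence on the sphere $\|f\|_p=1$ each summand satisfies $|f(\xi^j)|^p\le Ku$; this uniform boundedness is exactly what makes concentration arguments possible.

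\textbf{Concentration and symmetrization.} For a fixed $f$ on the sphere the variables $|f(\xi^j)|^p$ are i.i.d., lie in $[0,Ku]$, have mean $\|f\|_p^p=1$ and variance $\le\int|f|^{2p}\,d\mu\le\|f\|_\infty^p\le Ku$, so Bernstein's inequality controls the deviation for one $f$. To reach the supremum $Z$ I would bound $\mathbb E Z$ by symmetrization, introducing Rademacher signs $\varepsilon_j$, and then apply the contraction (comparison) principle: on $[0,(Ku)^{1/p}]$ the map $s\mapsto s^p$ is Lipschitz with constant $\asymp (Ku)^{1-1/p}$, so the Rademacher average of $\{|f(\xi^j)|^p\}$ is comparable to $(Ku)^{1-1/p}$ times that of the linear functions $\{f(\xi^j)\}$. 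This reduces everything to estimating $\mathbb E_\varepsilon\sup_{A,f}\bigl|\frac1m\sum_j\varepsilon_j f(\xi^j)\bigr|$.

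\textbf{Entropy/chaining and deviation.} The crux is a Dudley-type bound for this Rademacher average over $\bigcup_{|A|=u}\{f\in V_A(\D_N):\|f\|_p\le 1\}$. The metric entropy splits into two contributions: the choice of support $A$ costs $\log\binom{N}{u}\le u\log(eN/u)\ll u\log N$, while inside each fixed $u$-dimensional subspace a volumetric estimate gives covering-number logarithm $\ll u\log(1/\delta)$, the diameter being governed by the sup-norm bound $(Ku)^{1/p}$. Integrating $\sqrt{\log\mathcal N(\delta)}$ over the relevant range of scales, together with the $1/m$ normalization, is where the factors $(\log(2Ku))^2$ and $(\log(2Ku)+\log\log N)$ enter and produce the threshold $m\gg Ku\log N(\log(2Ku))^2(\log(2Ku)+\log\log N)$. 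Finally I would upgrade the bound on $\mathbb E Z$ to the stated high-probability form via a concentration inequality for suprema of empirical processes (bounded differences / Talagrand), where the variance proxy $\asymp Ku$ and the entropy scale $\asymp\log^2(2Ku)$ yield the exponent $-cm/(Ku\log^2(2Ku))$. The main obstacle is precisely this chaining step: the combinatorial entropy from the $\binom{N}{u}$ subspaces and the continuous volumetric entropy inside each subspace must be combined with the Lipschitz factor $(Ku)^{1-1/p}$ carefully enough to hit the exact threshold for $m$ and the exact probability exponent, rather than a cruder net-plus-union-bound argument that would lose further logarithmic factors.
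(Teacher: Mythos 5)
First, a point of order: the paper does not prove Theorem \ref{RT2} at all --- it is imported verbatim from \cite{DTM2} and used as a black box. So there is no in-paper proof to compare against; the comparison below is with the argument in the cited reference.

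Your outline has the right architecture and its ingredients do match the strategy used in \cite{DTM2}: the Nikol'skii inequality $\|f\|_\infty\le (Ku)^{1/p}\|f\|_p$ on $\Sigma_u(\D_N)$ (your derivation via \eqref{ub}, Cauchy--Schwarz, \eqref{Bessel} and the interpolation $\|f\|_2^2\le\|f\|_\infty^{2-p}\|f\|_p^p$ is correct), symmetrization plus contraction with the Lipschitz constant $\asymp(Ku)^{1-1/p}$ of $s\mapsto s^p$, and an entropy estimate for $\bigcup_{|A|=u}V_A(\D_N)$ combining the combinatorial cost $\log\binom{N}{u}\ll u\log N$ with a volumetric bound inside each $u$-dimensional subspace. (A minor caveat: the coefficients are complex, so the contraction principle must be applied to $z\mapsto |z|^p$ on $\bbC$, which costs only a constant but should be said.)

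The step you yourself flag as ``the main obstacle'' is, however, a genuine gap, and not merely a matter of bookkeeping. If one runs the plan as literally described --- Dudley's entropy integral in the uniform metric, with diameter $(Ku)^{1/p}$ and entropy logarithm $\asymp u\log N+u\log(1/\delta)$ --- one gets
$\mathbb E Z\ll (Ku)^{1-1/p}\, m^{-1/2}(Ku)^{1/p}\sqrt{u\log N}=m^{-1/2}\,Ku\,\sqrt{u\log N}$,
which forces $m\gtrsim (Ku)^2u\log N$: polynomially worse than the stated threshold $m\asymp Ku\log N(\log(2Ku))^3$. Using the empirical $L_2$ metric instead of $\|\cdot\|_\infty$ in the chaining would fix the scaling, but that metric is itself random and controlling it is essentially the statement being proved; this circularity is exactly what the naive net-plus-Dudley scheme cannot break. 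The proof in \cite{DTM2} avoids it by invoking a scale-sensitive conditional discretization theorem (in the line of the entropy-number criteria of Dai, Prymak, Shadrin, Temlyakov and Tikhonov), whose hypothesis has the form $\varepsilon_k(\mathcal F,L_\infty)\ll B(m/k)^{1/p}$ for $k\le m$: the chaining there is organized so that the sup-norm diameter $(Ku)^{1/p}$ enters only through $\asymp\log(2Ku)$ dyadic levels, which is precisely where the factors $(\log(2Ku))^2$, $\log(2Ku)+\log\log N$ and the exponent $-cm/(Ku\log^2(2Ku))$ come from. Without this (or an equivalent iteration/self-bounding argument), your outline does not close at the claimed sample threshold.
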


 \begin{Theorem}\label{RT3}  Assume that $\Psi$ is a uniformly bounded Riesz system (in the space  $L_2(\Omega,\mu)$) satisfying (\ref{ub}) and \eqref{Riesz} for some constants $0<R_1\leq R_2<\infty$.
Let $2\le p<\infty$  and $a>0$. For any $v\in\N$ denote $u := \lceil (1+c)v\rceil$, where $c$ is from Theorem \ref{NUT1}.  Assume in addition that $\Psi \in NI(2,p,H,u)$ with $p\in [2,\infty)$.
 There exist constants $c'=c'(a,b,\bt,p,R_1,R_2,d)$ and $C'=C'(a,b,\bt,p,d)$ such that   we have the bound   
 \begin{equation}\label{R4s}
 \varrho_{m}^{o}(\bW^{a,b}_{A_\bt}(\Psi),L_p(\Omega,\mu)) \le C' H v^{1/2 -1/\bt -a}  (\log(2v))^{(d-1)(a+b)}. 
\end{equation}
for any $m$ satisfying 
$$
m\ge c'    v  (\log(2v ))^4 .
$$
Moreover, this bound is provided by the WOMP. 
\end{Theorem}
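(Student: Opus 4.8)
The plan is to combine three ingredients already established in the excerpt: the constructive near-best $v$-term approximation of Lemma \ref{NL1}, the universal discretization result Theorem \ref{RT2}, and the WOMP Lebesgue-type inequality Theorem \ref{NUT1}. The guiding observation is that for $2\le p<\infty$ we have $p^\ast=\min(p,2)=2$, so the exponent $-a+1/p^\ast-1/\bt = 1/2-1/\bt-a$ produced by Lemma \ref{NL1} already matches the target rate in (\ref{R4s}); the remaining work is to show that a WOMP run on a suitable discretization realizes this rate as a genuine sampling recovery from $m$ points.

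First I would invoke Lemma \ref{NL1} to obtain a fixed finite index set $Q\subset\Z^d$ with $|Q|\le v^{c^\ast}$, depending only on $v$ (and on $a,\bt,d,p$) but not on any measure or sample set, together with the constructive method $A_{v,\xi}$. Set $N:=|Q|$ and note $\log N\ll \log(2v)$, which will be the source of the logarithmic power in the sampling budget. The subsystem $\D_N:=\Psi(Q)$ inherits uniform boundedness and the Riesz property (\ref{Riesz}) with the same constants $R_1,R_2$, and since $\Sigma_u(\Psi(Q))\subset\Sigma_u(\Psi)$ it also inherits the Nikol'skii inequality $\Psi(Q)\in NI(2,p,H,u)$.

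Next I would produce the sampling set. Being Riesz, $\Psi(Q)$ is Bessel with $K=R_1^{-2}$, so Theorem \ref{RT2} applied with $p=2$ furnishes, with positive probability, a set $\xi=\{\xi^j\}_{j=1}^m$ giving the $L_2$-universal discretization (\ref{ud}) for $\cX_u(\Psi(Q))$, provided $m\gg Ku\log N\,(\log(2Ku))^2(\log(2Ku)+\log\log N)$. Substituting $u=\lceil(1+c)v\rceil\asymp v$, $K=R_1^{-2}=O(1)$, and $\log N\ll\log(2v)$, this requirement collapses to $m\ge c' v(\log(2v))^4$, exactly the hypothesis of the theorem; I would then fix such a deterministic realization $\xi$.

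With the discretization in hand I would run the WOMP on $\D_N(\Og_m)$. Since $v\le u/(1+c)$ by the choice of $u$, Theorem \ref{NUT1} applies and yields, after $cv$ iterations, a residual satisfying $\|f_{cv}\|_{L_p(\Omega,\mu)}\le HC_1\,\sigma_v(f,\Psi(Q))_{L_p(\Omega,\mu_\xi)}$, the approximant being $f-f_{cv}$, a function of the $m$ sampled values alone. I would bound the right-hand side by Lemma \ref{NL1}: since $A_{v,\xi}(f)\in\Sigma_v(\Psi(Q))$, using $p^\ast=2$ we get $\sigma_v(f,\Psi(Q))_{L_p(\Omega,\mu_\xi)}\le\|f-A_{v,\xi}(f)\|_{L_p(\Omega,\mu_\xi)}\ll v^{1/2-1/\bt-a}(\log(2v))^{(d-1)(a+b)}$. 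Composing the two inequalities gives (\ref{R4s}) and, since the WOMP uses only $\{f(\xi^j)\}_{j=1}^m$, bounds $\varrho_m^o$. The only genuinely delicate point is the bookkeeping in the discretization step: one must confirm that the sparsity level $v$ stays below $u/(1+c)$ while the polynomial size $|Q|\le v^{c^\ast}$ keeps $\log N$ at scale $\log(2v)$, so that the single factor $\log N$, the squared factor $(\log(2Ku))^2$, and the mixed factor $(\log(2Ku)+\log\log N)$ combine to exactly the fourth power $(\log(2v))^4$ and no more. Everything else is a direct chaining of the cited inequalities.
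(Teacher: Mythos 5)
Your proposal is correct and follows essentially the same route as the paper's own proof: Lemma \ref{NL1} to fix the polynomial-size index set $Q$ and supply the $v$-term benchmark in $L_p(\Omega,\mu_\xi)$, Theorem \ref{RT2} with $p=2$ and $K=R_1^{-2}$ to produce the universal discretization set of size $m\ll v(\log(2v))^4$, and Theorem \ref{NUT1} to convert the WOMP residual bound into (\ref{R4s}). The log-factor bookkeeping you flag as the delicate point is handled in the paper exactly as you describe, via $N\le|Q|\le v^{c^*}$ so that $\log N\ll\log(2v)$.
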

\begin{proof} First, we use Lemma \ref{NL1} in the space $L_p(\Omega,\mu)$. In our case $B=1$ and $2\le p<\infty$, which implies that 
$p^*=2$. Consider the system $\Psi_Q := \{\psi_\bk\}_{\bk\in Q}$, $|Q| \le v^{c^*}$, from Lemma
\ref{NL1}. Note, that $Q$ does not depend on $\mu$. 
 By Theorem \ref{NUT1} with $\D_N = \Psi_Q$ and Theorem \ref{RT2} with $p=2$ and $K=R_1^{-2}$  there exist $m$ points  $\xi^1,\cdots, \xi^m\in  \Og$  with 
\be\label{mbound}
		m\leq C       u (\log N)^4,
\ee  
such that  for any given $f_0\in \cC(\Omega)$,  the WOMP with weakness parameter $t$ applied to $f_0$ with respect to the system  $\D_N(\Omega_m)$ in the space $L_2(\Omega_m,\mu_m)$ provides for 
any integer $1\le v \le u/(1+c)$
\be\label{R5}
	\|f_{cv} \|_{L_p(\Omega,\mu)} \le C'H \sigma_v(f_0,\CD_N)_{L_p(\Og, \mu_\xi)}.
	\ee
In order to bound the right side of (\ref{R5}) we apply Lemma \ref{NL1} in the space $L_p(\Og, \mu_\xi)$. 
For that it is sufficient to check that $\|\psi_\bk\|_{L_p(\Og, \mu_\xi)} \le 1$, $\bk\in\Z^d$. This follows from the assumption that $\Psi$ satisfies (\ref{ub}). Thus, by Lemma \ref{NL1} we obtain for $f_0 \in \bW^{a,b}_{A_\bt}(\Psi)$
\be\label{R6}
 \sigma_v(f_0,\Psi_Q)_{L_p(\Og, \mu_\xi)} \le CHv^{1/2 -1/\bt-a}(\log(2v))^{(d-1)(a+b)}.
 \ee
  Combining (\ref{R6}), (\ref{R5}), and taking into account (\ref{mbound}) and the bound
  $$
  N\le |Q| \le  v^{c^*},
  $$
   we complete the proof.

\end{proof}
 
\begin{Corollary}\label{RC1}  Assume that $\Psi$ is a uniformly bounded Riesz system (in the space  $L_2(\Omega,\mu)$) satisfying (\ref{ub}) and \eqref{Riesz} for some constants $0<R_1\leq R_2<\infty$.
Let $2\le p<\infty$  and $r>0$.  
 There exist constants $c=c(a,b,\bt,p,R_1,R_2,d)$ and $C=C(a,b,\bt,p,d)$ such that   we have the bound   
 \begin{equation}\label{R4w}
 \varrho_{m}^{o}(\bW^{a,b}_{A_\bt}(\Psi),L_p(\Omega,\mu)) \le C  v^{1-1/p -1/\bt -a}  (\log(2v))^{(d-1)(a+b)} 
\end{equation}
for any $m$ satisfying 
$$
m\ge c   v  (\log(2v ))^4 .
$$
Moreover, this bound is provided by the WOMP.
\end{Corollary}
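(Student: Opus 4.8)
The plan is to observe that Corollary \ref{RC1} is nothing but Theorem \ref{RT3} once the auxiliary hypothesis $\Psi\in NI(2,p,H,u)$ is discharged automatically and the Nikol'skii constant $H$ is tracked explicitly as a function of $v$. Comparing the two statements, the only discrepancy is between the exponent $1/2-1/\bt-a$ in \eqref{R4s} (carrying a loose factor $H$) and the exponent $1-1/p-1/\bt-a$ asserted here. Since
$$
1-\tfrac1p-\tfrac1\bt-a=\Bl(\tfrac12-\tfrac1\bt-a\Br)+\Bl(\tfrac12-\tfrac1p\Br),
$$
everything reduces to showing that for the system of interest one may take $H\asymp v^{1/2-1/p}$, with an implied constant depending only on the admissible parameters.

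First I would prove the relevant $u$-term Nikol'skii inequality directly from Conditions A and B2, applied to the finite subsystem $\Psi_Q=\{\psi_\bk\}_{\bk\in Q}$ furnished by Lemma \ref{NL1}, which inherits both \eqref{ub} and \eqref{Riesz} from $\Psi$. For any $f=\sum_{\bk\in A}c_\bk\psi_\bk$ with $|A|\le u$, the uniform bound \eqref{ub} and the Cauchy--Schwarz inequality give
$$
\|f\|_\infty\le\sum_{\bk\in A}|c_\bk|\le u^{1/2}\Bl(\sum_{\bk\in A}|c_\bk|^2\Br)^{1/2},
$$
while the lower Riesz bound in \eqref{Riesz} yields $\bl(\sum_{\bk\in A}|c_\bk|^2\br)^{1/2}\le R_1^{-1}\|f\|_2$, so that $\|f\|_\infty\le u^{1/2}R_1^{-1}\|f\|_2$. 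Interpolating between $L_2$ and $L_\infty$ via $\|f\|_p\le\|f\|_2^{2/p}\|f\|_\infty^{1-2/p}$ (valid since $2\le p<\infty$) then produces
$$
\|f\|_p\le u^{1/2-1/p}R_1^{-(1-2/p)}\|f\|_2,
$$
i.e. $\Psi_Q\in NI(2,p,H,u)$ with $H=u^{1/2-1/p}R_1^{-(1-2/p)}$.

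It remains to feed this into Theorem \ref{RT3}. As $u=\lceil(1+c)v\rceil\asymp v$, we have $H\asymp v^{1/2-1/p}$ with a constant depending only on $p,R_1,c$, and hence on $a,b,\bt,p,R_1,R_2,d$; plugging into \eqref{R4s} and collecting powers of $v$ gives
$$
\varrho_m^o(\bW^{a,b}_{A_\bt}(\Psi),L_p(\Omega,\mu))\ll v^{1/2-1/p}\,v^{1/2-1/\bt-a}(\log(2v))^{(d-1)(a+b)}=v^{1-1/p-1/\bt-a}(\log(2v))^{(d-1)(a+b)},
$$
which is the claimed estimate; the range $m\ge c'v(\log(2v))^4$ and the realization by the WOMP are inherited verbatim from Theorem \ref{RT3}. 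The single delicate point is the Nikol'skii bound: it is precisely the interplay between uniform boundedness (controlling $\|f\|_\infty$ through the triangle inequality) and the Riesz \emph{lower} bound (converting the coefficient $\ell_2$ norm back into $\|f\|_2$) that yields the sharp exponent $u^{1/2-1/p}$, and this in turn is exactly what upgrades the $1/2$ in \eqref{R4s} to the $1-1/p$ of the Corollary.
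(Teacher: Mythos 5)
Your proposal is correct and follows essentially the same route as the paper: the paper also reduces Corollary \ref{RC1} to Theorem \ref{RT3} by establishing $\Psi\in NI(2,p,H,u)$ with $H\asymp u^{1/2-1/p}$ via the triangle inequality, Cauchy--Schwarz, the Riesz bound, and the interpolation $\|f\|_p\le\|f\|_2^{2/p}\|f\|_\infty^{1-2/p}$. (Your version is in fact slightly more careful: the coefficient $\ell_2$-to-$L_2$ step uses the lower Riesz constant $R_1^{-1}$, as you write, whereas the paper's displayed inequality attributes it to $R_2$.)
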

\begin{proof} We prove that a system $\Psi$ satisfying the conditions of Corollary \ref{RC1} also satisfies the condition $\Psi \in NI(2,p,H,u)$ with $H= C(R_2) u^{1/2-1/p}$, which is required in Theorem \ref{RT3}. Then Corollary \ref{RC1} follows from Theorem \ref{RT3}.
Indeed, let
$$
f := \sum_{\bk \in Q} c_\bk \psi_\bk,\qquad |Q| \le u.
$$
Then
$$
\|f\|_\infty \le \sum_{\bk \in Q} |c_\bk| \le u^{1/2} \left(\sum_{\bk \in Q} |c_\bk|^{1/2}\right)^{1/2} \le u^{1/2} R_2 \|f\|_2.
$$
Using the following well known simple inequality $\|f\|_p \le \|f\|_2^{2/p}\|f\|_\infty^{1-2/p}$, $2\le p\le \infty$, we obtain
$$
\|f\|_p \le R_2^{1-2/p}u^{1/2-1/p} \|f\|_2.
$$
\end{proof}

We derived Theorem \ref{RT3} from Theorem \ref{NUT1}. We now formulate an analog of Theorem \ref{RT3}, which can be derived from Theorem \ref{NUT3} in the same way as Theorem \ref{RT3} has been derived from Theorem \ref{NUT1}.

\begin{Theorem}\label{RT4} Let $2\le p<\infty$ and let $m$, $v$ be given natural numbers.    Let $\Psi$ be  a uniformly bounded Bessel system satisfying (\ref{ub}) and (\ref{Bessel}) such that $\Psi \in NI(2,p,H,2v)$. 
There exist constants $c=c(a,b,\bt,p,K,d)$ and $C=C(a,b,\bt,p,d)$ such that   we have the bound   
 \begin{equation}\label{R4ss}
 \varrho_{m}^{o}(\bW^{a,b}_{A_\bt}(\Psi),L_p(\Omega,\mu)) \le CH v^{1/2 -1/\bt -a}  (\log(2v))^{(d-1)(a+b)}  
\end{equation}
for any $m$ satisfying 
$$
m\ge c   v  (\log(2v ))^4 .
$$
 \end{Theorem}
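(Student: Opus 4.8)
The plan is to follow the proof of Theorem \ref{RT3} almost verbatim, replacing the role of Theorem \ref{NUT1} (tailored to Riesz systems and the WOMP) by Theorem \ref{NUT3} (which needs only a Bessel system, a one-sided $L_2$-discretization, and the best $v$-term approximant $B_v$). First I would invoke Lemma \ref{NL1} to produce a frequency set $Q \subset \Z^d$ with $|Q| \le v^{c^*}$ that does not depend on the measure, together with the constructive $v$-term approximant $A_{v,\xi}$ from $\Psi(Q)$. Since $\Psi$ is uniformly bounded and $p \ge 2$, here $p^* = 2$, so Lemma \ref{NL1} gives $\|f - A_{v,\xi}(f)\|_{L_p(\Og,\mu_\xi)} \le C v^{1/2 - 1/\bt - a}(\log(2v))^{(d-1)(a+b)}$ for every $f \in \bW^{a,b}_{A_\bt}(\Psi)$; in particular $\sigma_v(f,\Psi_Q)_{L_p(\Og,\mu_\xi)}$ is bounded by the same quantity, where $\Psi_Q := \{\psi_\bk\}_{\bk \in Q}$.

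Next I would produce the sampling points. Applying Theorem \ref{RT2} with $p = 2$, with the Bessel constant $K$ from \eqref{Bessel}, and with $u = 2v$, there is a set $\{\xi^1,\dots,\xi^m\}$ providing the two-sided $L_2$-discretization \eqref{Ex2} for the collection $\cX_{2v}(\Psi_Q)$, valid for $m$ as small as $C K (2v)\log N\,(\log(2K\cdot 2v))^2(\log(2K\cdot 2v) + \log\log N)$. Because $N = |Q| \le v^{c^*}$ forces $\log N \asymp \log(2v)$ and $\log\log N \asymp \log\log(2v)$, this requirement collapses to $m \ge c\,v\,(\log(2v))^4$, which is exactly the hypothesis of the theorem. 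The lower inequality in \eqref{Ex2} with $p = 2$ immediately supplies the one-sided discretization \eqref{D12} with $D = \sqrt 2$ on $\Sigma_{2v}(\Psi_Q)$.

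Now the two imported results fit together. With $\D_N = \Psi_Q$, the hypotheses of Theorem \ref{NUT3} hold: $\Psi_Q \in NI(2,p,H,2v)$ by assumption, $2v \le N$ by the construction of $Q$ for the relevant range of $v$, and \eqref{D12} is in force with $D = \sqrt 2$. Hence \eqref{D13} gives $\|f - B_v(f,\Psi_Q,L_2(\xi))\|_{L_p(\Omega,\mu)} \le 2^{1/p}(2\sqrt 2\,H + 1)\,\sigma_v(f,\Psi_Q)_{L_p(\Og,\mu_\xi)}$. Substituting the bound for $\sigma_v$ from the first step and using $H \ge 1$ to absorb the numerical constants yields $\|f - B_v(f,\Psi_Q,L_2(\xi))\|_{L_p(\Omega,\mu)} \le C H\, v^{1/2 - 1/\bt - a}(\log(2v))^{(d-1)(a+b)}$. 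Since the map $f \mapsto B_v(f,\Psi_Q,L_2(\xi))$ depends only on the sampled values $\{f(\xi^j)\}_{j=1}^m$, it is an admissible recovery operator, so taking the supremum over $f \in \bW^{a,b}_{A_\bt}(\Psi)$ bounds $\varrho_m^o$ as claimed.

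The substantive content is entirely imported from Lemma \ref{NL1}, Theorem \ref{RT2}, and Theorem \ref{NUT3}, so there is no genuinely hard step; the only care needed is bookkeeping. The main point to verify is that the polynomial bound $|Q| \le v^{c^*}$ on the dictionary size keeps the sampling budget of Theorem \ref{RT2} at the level $m \asymp v(\log(2v))^4$, and that the full discretization \eqref{Ex2} really does deliver the weaker one-sided hypothesis \eqref{D12} demanded by Theorem \ref{NUT3}. Unlike Theorem \ref{RT3}, the recovery here is realized by the best $v$-term approximation $B_v$ rather than by the WOMP, which is precisely why the statement of Theorem \ref{RT4} omits the clause asserting that the bound is provided by the WOMP.
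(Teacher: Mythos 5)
Your proposal is correct and follows exactly the route the paper intends: the paper gives no separate proof of Theorem \ref{RT4}, stating only that it "can be derived from Theorem \ref{NUT3} in the same way as Theorem \ref{RT3} has been derived from Theorem \ref{NUT1}," and your argument is a faithful execution of that plan (Lemma \ref{NL1} for the set $Q$ and the $\sigma_v$ bound in $L_p(\Og,\mu_\xi)$, Theorem \ref{RT2} with $p=2$ for the discretization points, and Theorem \ref{NUT3} with $\D_N=\Psi_Q$ and $D=\sqrt2$ for the recovery bound). Your closing observation about why the WOMP clause is absent from the statement is also consistent with the paper.
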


In the special case $\Psi = \Tr^d$ is the trigonometric system, Corollary \ref{RC1} gives 
 \begin{equation}\label{R7}
 \varrho_{m}^{o}(\bW^{a,b}_{A_\bt}(\Tr^d),L_p(\Omega,\mu)) \ll  v^{1-1/p -1/\bt -a}(\log(2v))^{(d-1)(a+b)}   
\end{equation}
 for $m\gg    v  (\log(2v ))^4$.

It is pointed out in \cite{DTM2} that known results on the RIP for the trigonometric system can be used 
for improving results on the universal discretization in the $L_2$ norm in the case of the trigonometric system. We explain that in more detail.   
 Let $M\in \N$ and $d\in \N$. Denote $\Pi(M) := [-M,M]^d$ to be the $d$-dimensional cube. Consider the system $\Tr^d(M) := \Tr^d(\Pi(M))$ of functions $e^{i(\bk,\bx)}$, $\bk \in \Pi(M)$, defined on $\T^d =[0,2\pi)^d$. Then $\Tr^d(M)$ is an orthonormal system in $L_2(\T^d,\mu)$ with $\mu$ being the normalized Lebesgue measure on $\T^d$. The cardinality of this system is $N(M):= |\Tr^d(M)| = (2M+1)^d$. We are interested in bounds on $m(\cX_v(\Tr^d(M)),2)$ in a special case, when $M\le v^c$ with some constant $c$, which may depend on $d$. Then Theorem \ref{RT2} with $p=2$
 gives 
 \be\label{DT1}
 m(\cX_v(\Tr^d(v^c)),2) \le C(c,d) v (\log (2v))^4.
 \ee
  It is  stated in \cite{DTM2} that the known results of \cite{HR} and \cite{Bour} allow us to improve the bound (\ref{DT1}):
 \be\label{HR1}
 m(\cX_v(\Tr^d(v^c)),2) \le C(c,d) v (\log (2v))^3.
 \ee
 This, in turn, implies the following estimate
  \begin{equation}\label{R9}
 \varrho_{m}^{o}(\bW^{a,b}_{A_\bt}(\Tr^d),L_p) \ll  v^{1-1/p -1/\bt -a} (\log(2v))^{(d-1)(a+b)},  
\end{equation}
 for $m\gg    v  (\log(2v ))^3$.

  \Addresses

\end{document}